\newtheorem{thm}{Theorem}[section]
\newtheorem{defn}[thm]{\textbf{Definition}}
\newtheorem{example}{Example}[section]
\newtheorem{cor}[thm]{Corollary}
\newtheorem{lem}[thm]{Lemma}
\newtheorem{prop}[thm]{Proposition}
\theoremstyle{definition}
\theoremstyle{definition}
\theoremstyle{remark}
\newtheorem{rem}[thm]{Remark}
\renewcommand{\k}{\Bbbk}
\newcommand{\de}{\Delta}
\newcommand{\ot}{\otimes}
\newcommand{\ma}{\mathcal}
\newcommand{\rt}{\rightarrow}
\newcommand{\tg}{\ensuremath{\mathrm{t}}}
\newcommand{\st}{\ensuremath{\mathrm{s}}}
\newcommand{\len}{l}
\title{Handle element on nearly Frobenius algebras}
\author{ Dalia Artenstein, Ana Gonz\'alez and Gustavo Mata.
}
\date{\today}
\begin{document}

	\maketitle
	
	\begin{abstract}
	In this article the concept of handle element of Frobenius algebras, as in \cite{K}, will be extended to nearly Frobenius algebras.  The main properties of this element will be analyzed in this case and many examples will be constructed. Also the Casimir and Schur elements of symmetric algebras will be considered (\cite{B}) and generalized for Frobenius and nearly Frobenius algebras showing which results still hold in this framework and which ones fail.
		
	\end{abstract}

\section{Introduction}

The notion of (commutative) nearly Frobenius algebra was introduced in the thesis of
the second author of this article. It was motivated by the result proved in \cite{CG}, which states
that: the homology of the free loop space $HL^{\ast}(LM)$ has the structure of a Frobenius algebra without
counit. Nearly Frobenius algebras were studied in \cite{GLS}, and more recently from an algebraic point of view in \cite{AGL15}, \cite{AGM1}, \cite{AGM2}, \cite{AGM3} and \cite{Gu}. 

The handle element (also called characteristic element or distinguished element) of a Frobenius algebra was introduced by L. Abrams in his thesis (see \cite{A0}). There, he proved the following two results, that show it is an useful concept:\\

{\bf Theorem 2.3.3 of \cite{A0}.} A Frobenius algebra is semisimple if and only if the handle element is a unit.\\

{\bf Proposition 2.3.4 of \cite{A0}.} In a Frobenius algebra, the ideal generated by the handle element coincides with the socle of the algebra.\\

The notion of Schur element of an absolutely irreducible representation of a symmetric algebra was first introduced by M. Geck in \cite{Ge}. In \cite{B} a slight generalization of that notion was presented for an epimorphism between two symmetric algebras. In that article the following result was proved:\\

{\bf Theorem 4.6 of \cite{B}} If $\lambda: A \rightarrow B$ is an epimorphism between symmetric algebras, the following properties are equivalent.
\begin{enumerate}
		\item The Schur element $s_{\phi}$ is invertible in $B$.
		\item $\phi:A\twoheadrightarrow  B$ is split as a morphism of $A$-$A$-bimodules.
		\item $B$ is a projective right $A$-module.
		\item Any projective $B$-module is a projective right $A$-module.
	\end{enumerate}

In this article we study the previous concepts, and it is organized as follows. 

In section 3 we introduce the notion of handle element for nearly Frobenius algebras. We prove an analogous to Theorem 2.3.3 from \cite{A0}: an algebra $A$ admits a nearly Frobenius structure where the handle element is a unit of $A$ if and only if $A$ is separable (Theorem 3.8). We prove that, under certain hypothesis, the handle element of a nearly Frobenius algebra $A$ belongs to its Jacobson radical (Proposition 3.10), but it does not belong to its socle generally (Example 3.9). It shows that Proposition 2.3.4 from \cite{A0} cannot be generalized to nearly Frobenius algebras.

In section 4 we introduce the notion of Schur element for nearly Frobenius algebras. We prove a generalization of Theorem 4.6 from \cite{B} where $\lambda: A \rightarrow B$ is an epimorphism of algebras between Frobenius algebras (Proposition 12), and we also prove that $(1) \Rightarrow (2)$ if $A$ is a symmetric nearly Frobenius algebra and $B$ is a symmetric algebra. However, the converse is not true (Example 4.2). 

\section{Preliminaries}
\subsection{Path algebras}

If $Q = (Q_0,Q_1,\st,\tg)$ is a finite connected quiver, $\Bbbk Q$ denotes its associated path algebra. Given $\rho$ a path in $\Bbbk Q$, we denote by $\len(\rho)$, $\st(\rho)$ and $\tg(\rho )$ the length, start and target of $\rho$, respectively. We denote by $J$ the ideal generated by all the arrows from $Q_1$. 
Let $x \in Q_0$, we say that $x$ is a sink (source) if there are no arrows $\rho$ such that $\st(\rho) = x$ ($\tg(\rho) = x$). \\
We say that an ideal $I \subset \Bbbk Q$ is admissible if there exist $k \geq 2$ such that $J^k \subset I \subset J^2$.\\

In the following definitions consider $I$ an admisible ideal and $A = \frac{\Bbbk Q}{I}$. 
\begin{defn}
 $A$ is a \emph{radical square zero} algebra if $ I = J^2$.

\end{defn}
\begin{defn}
	The algebra $A$ is a \emph{toupie} algebra if $Q$ has one sink $(\omega)$ and one source $(0)$, and for every vertex $x$ different to $\omega$ and $0$ there is only one arrow $\alpha$ such that $\st(\alpha)=x$ and only one arrow $\beta$ such that $\tg(\beta)=x$.
\end{defn}

\subsection{Frobenius and nearly Frobenius algebras}
In what follows $R$ is an associative commutative ring with unit.

\begin{defn}
		An associative $R$-algebra $A$ is \emph{Frobenius} if $A$ is finitely generated and projective as a $R$-module, and there exists a left $A$-module isomorphism 
	$$\lambda_l:A\rightarrow A^*.$$
\end{defn}
\begin{rem}
	The isomorphism $\lambda_l$ induces a morphism $\varepsilon_A:A\to R$ as $\varepsilon_A=\lambda_l\bigl(1_A\bigr),$ which is called \emph{trace}.
\end{rem}

\begin{defn}
An associative $R$-algebra $A$ is \emph{symmetric} if $A$ is a Frobenius algebra with trace $\varepsilon_A:A\to R$ such that $\varepsilon_A(ab)=\varepsilon_A(ba)$ for all $a,b\in A$.
\end{defn}
 Another characterization of Frobenius algebra, given by Abrams for example in \cite{A1}, is that an algebra $A$ is Frobenius if it has a coassociative counital comultiplication $\Delta:A\rightarrow A\otimes_R A$ which is a map of $A$-bimodules.
 
 From this characterization, the following definition arises naturally:
\begin{prop}\label{proposition2.3}
	If $A$ is a symmetric algebra, then its coproduct verifies the following property
	$$\Delta\bigl(1_A\bigr)=\tau\circ\Delta\bigl(1_A\bigr),$$
	where $\tau$ is the transposition.  
\end{prop}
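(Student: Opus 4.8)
The plan is to exploit the definition of a symmetric algebra via its trace $\varepsilon_A$ together with the compatibility between the comultiplication $\Delta$ and $\varepsilon_A$. Write $\Delta(1_A) = \sum_i x_i \otimes y_i$ (a finite sum, using that $A$ is finitely generated projective over $R$). The counit axiom gives $\sum_i \varepsilon_A(x_i) y_i = 1_A = \sum_i x_i \varepsilon_A(y_i)$, and more usefully, for every $a \in A$ the bimodule property of $\Delta$ yields $\Delta(a) = \sum_i a x_i \otimes y_i = \sum_i x_i \otimes y_i a$. The key identity I would extract is the ``Frobenius'' relation $\sum_i x_i \otimes y_i a = \sum_i a x_i \otimes y_i$ and, pairing with $\varepsilon_A$ in one slot, the reconstruction formula $a = \sum_i \varepsilon_A(a x_i) y_i = \sum_i x_i \varepsilon_A(y_i a)$ for all $a$.

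Next I would compute $\tau\circ\Delta(1_A) = \sum_i y_i \otimes x_i$ and show it equals $\sum_i x_i \otimes y_i$. The natural route is to test both tensors by applying $\mathrm{id}\otimes\varepsilon_A(\,\cdot\, a)$ (i.e.\ multiply the second factor by an arbitrary $a \in A$ and apply $\varepsilon_A$ there), using that $A$ being finitely generated projective over $R$ means an element of $A\otimes_R A$ is determined by the family of elements of $A$ obtained this way — equivalently, $\mathrm{id}\otimes\lambda_l$ is an isomorphism $A\otimes_R A \to A\otimes_R A^* \cong \mathrm{Hom}_R(A,A)$. Applying this to $\sum_i x_i\otimes y_i$ gives the map $a \mapsto \sum_i x_i\,\varepsilon_A(y_i a) = a$; applying it to $\sum_i y_i\otimes x_i$ gives $a \mapsto \sum_i y_i\,\varepsilon_A(x_i a)$. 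So it remains to prove $\sum_i y_i\,\varepsilon_A(x_i a) = a$ for all $a$, and here is precisely where symmetry enters: $\varepsilon_A(x_i a) = \varepsilon_A(a x_i)$, so $\sum_i y_i\,\varepsilon_A(x_i a) = \sum_i \varepsilon_A(a x_i)\, y_i = a$ by the reconstruction formula above. Since both tensors map to the identity endomorphism under an injective map, they coincide.

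An alternative, perhaps cleaner, packaging: recall that for a symmetric algebra the trace is symmetric, hence the Casimir-type element $\Delta(1_A) = \sum_i x_i \otimes y_i$ satisfies $\sum_i x_i y_i$ central and, more to the point, $\sum_i y_i \otimes x_i$ is again a ``dual basis'' tensor for the same pairing $(-, -) = \varepsilon_A(-\cdot-)$, which is symmetric by hypothesis; uniqueness of the element of $A \otimes_R A$ representing the copairing dual to a given nondegenerate bilinear form then forces $\sum_i x_i\otimes y_i = \sum_i y_i\otimes x_i$. I would state this uniqueness as a short lemma (the copairing associated to a nondegenerate symmetric bilinear form on a finitely generated projective module is symmetric) and deduce the proposition immediately.

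I expect the main obstacle to be purely bookkeeping: justifying that $\Delta(1_A) \in A\otimes_R A$ is detected by the ``pair in one slot with $\varepsilon_A$'' maps, i.e.\ that the $R$-linear map $A\otimes_R A \to \mathrm{Hom}_R(A,A)$ sending $u\otimes v \mapsto (a \mapsto u\,\varepsilon_A(va))$ is injective. This is where finite generation and projectivity of $A$ as an $R$-module (and nondegeneracy of $\lambda_l$, hence of the pairing) are genuinely used; once that is in place, the symmetry condition $\varepsilon_A(ab)=\varepsilon_A(ba)$ does the rest almost formally. No delicate estimates are involved — the content is entirely in setting up the right nondegeneracy statement and then turning the crank with the reconstruction identity.
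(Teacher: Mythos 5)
Your proof is correct and rests on the same idea as the paper's: $\Delta(1_A)$ is uniquely determined by the nondegenerate trace pairing $(a,b)\mapsto\varepsilon_A(ab)$, so symmetry of $\varepsilon_A$ forces $\Delta(1_A)=\tau\circ\Delta(1_A)$. You merely spell out the injectivity of $A\otimes_R A\to \operatorname{Hom}_R(A,A)$ and the reconstruction identities that the paper's one-line proof leaves implicit, and your ``alternative packaging'' via uniqueness of the copairing is exactly the paper's argument.
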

\begin{proof}
The coproduct is characterized by the relation 
$$\varepsilon\bigl(x(ab)\bigr)=\sum \varepsilon\bigl(x_1a\bigr)\varepsilon\bigl(x_2b\bigr),\; \mbox{where}\;\Delta(x)=\sum x_1\otimes x_2$$
then, using that $\varepsilon(ab)=\varepsilon(ba)$ for all $a,b\in A$, we have that $\Delta\bigl(1_A\bigr)=\tau\circ\Delta\bigl(1_A\bigr).$
\end{proof}

\begin{defn}
	An associative $R$-algebra $A$ is a \emph{nearly Frobenius algebra} if it admits an homomorphism $\Delta:A\rt A\ot_R A$ of $A$-bimodules.

	Let $(A,\Delta_A)$ and $(B,\Delta_B)$ be two nearly Frobenius algebras. An homomorphism $f:A\rightarrow B$ is a \emph{nearly Frobenius homomorphism} if it is a morphism of $R$-algebras and the following diagram commutes
	$$\xymatrix{A\ar[r]^{f}\ar[d]_{\Delta_A}& B\ar[d]^{\Delta_B}\\
		A\otimes_R A\ar[r]_{f\otimes f}&B\otimes_R B
	}.$$
\end{defn}

Based on Proposition \ref{proposition2.3} we give the following definition.
\begin{defn}
	A nearly Frobenius algebra $A$ is \emph{ symmetric} if the coproduct satisfies that 
		$$\Delta\bigl(1_A\bigr)=\tau\circ\Delta\bigl(1_A\bigr).$$
\end{defn}

\begin{example}
		We consider $(A, \Delta)$ the following nearly Frobenius algebra, $A$ is the truncated polynomial algebra in one variable $\Bbbk[x]/x^{n+1}$ and $$\de(1)=\sum_{i+j=n+1}x^i\otimes x^j.$$
		It is immediate to see that $(A, \Delta)$ is a symmetric nearly Frobenius algebra.
\end{example}

\begin{rem}
	The Frobenius space $\mathcal{E}_A$ (see \cite{AGL15} for definition) associated to the algebra $A$ admits structure of $A$-module, where the action is given as follows: 
	$$\begin{array}{ccc}
		A\otimes \mathcal{E}_A&\to&\mathcal{E}_A\\
		u\otimes\de&\mapsto& \left[
		\begin{array}{cccc}
		u\ast\de:&A&\to&A\otimes A\\
			     &x&\mapsto&\bigl(x\otimes u\bigr)\de(1)
		\end{array}\right]
	\end{array}$$

	Observe that $$x\cdot(u\ast\de)(1)=(x\otimes 1)(1\otimes u)\de(1)=(1\otimes u)(x\otimes 1)\de(1)=(1\otimes u)\de(1)(1\otimes x)=(u\ast\de)(1)\cdot x$$
	\end{rem}


\section{Handle element}

In the context of Frobenius algebras, in \cite{K}, is defined the handle element as follows.\\ If $A$ is a Frobenius algebra, introduce the \emph{handle operator} as the $R$-linear map $\omega: A\to A$ defined as the composite $m\circ\Delta$, where $m$ is the multiplication of $A$ and $\Delta$ is the coproduct. This operator is a right (and left) $A$-module homomorphism, and it is given by multiplication by a central element $\omega_{A,\Delta}$. This element is called the \emph{handle element} and $\omega_{A,\Delta}=m\circ\Delta(1)$.

\subsection{Constructing handle elements} 
\begin{defn}
		If $(A,\de)$ is a nearly Frobenius $R$-algebra we define its \emph{handle element} as:
				$$\omega_{A,\de}:=m\circ\de(1)\in A.$$
\end{defn}

\begin{rem}
	The handle element is central: since $\de$ is an $A$-bimodule morphism, if $\de(1)=\sum x_i\otimes y_i$ then $\sum xx_i\otimes y_i=\sum x_i\otimes y_ix$, for all $x\in A$. Therefore, applying the multiplication, $\sum xx_i y_i=\sum x_i y_ix$ and $x\left(\sum_i x_i y_i\right)=\left(\sum_i x_i y_i\right)x$ which means $x\omega_A=\omega_Ax$, for all $x\in A$ and  $\omega_A\in Z(A)$. 
\end{rem}

\begin{prop}
	The handle element respects product and tensor product structure, i.e.
	$$\omega_{A_1\times A_2}=(\omega_{A_1},\omega_{A_2}),$$
	$$\omega_{A_1\otimes A_2}=\omega_{A_1}\otimes\omega_{A_2}.$$ 
\end{prop}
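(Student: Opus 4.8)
The plan is to first make explicit the nearly Frobenius coproducts that are tacitly intended on $A_1\times A_2$ and on $A_1\otimes A_2$, and then simply evaluate the composite $m\circ\Delta$ on the respective units. Since the handle element depends on the choice of coproduct, I would begin the proof by recording these structures: on the product one takes the coproduct induced componentwise, and on the tensor product one takes $\Delta_{A_1\otimes A_2}=(\mathrm{id}\otimes\tau\otimes\mathrm{id})\circ(\Delta_{A_1}\otimes\Delta_{A_2})$, where $\tau$ is the transposition identifying $(A_1\otimes A_2)\otimes(A_1\otimes A_2)$ with $(A_1\otimes A_1)\otimes(A_2\otimes A_2)$; a short check shows both are $A$-bimodule homomorphisms (in the product case because each $\Delta_{A_i}$ is one and the cross summands are annihilated by the action; in the tensor case because it is a tensor product of bimodule maps conjugated by the canonical isomorphism), so the handle elements are well defined.

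For the product, write $\Delta_{A_i}(1_{A_i})=\sum_k x^{(i)}_k\otimes y^{(i)}_k$. Under the decomposition $(A_1\times A_2)\otimes(A_1\times A_2)\cong (A_1\otimes A_1)\oplus(A_1\otimes A_2)\oplus(A_2\otimes A_1)\oplus(A_2\otimes A_2)$, the componentwise coproduct sends $(1_{A_1},1_{A_2})$ to $\sum_k (x^{(1)}_k,0)\otimes(y^{(1)}_k,0)+\sum_k(0,x^{(2)}_k)\otimes(0,y^{(2)}_k)$, with the cross terms zero. Applying the multiplication of $A_1\times A_2$ gives $\big(\sum_k x^{(1)}_ky^{(1)}_k,\ \sum_k x^{(2)}_ky^{(2)}_k\big)=(\omega_{A_1},\omega_{A_2})$, which is the first identity. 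For the tensor product, set $\Delta_{A_1}(1)=\sum_k x_k\otimes y_k$ and $\Delta_{A_2}(1)=\sum_l u_l\otimes v_l$; then $\Delta_{A_1\otimes A_2}(1_{A_1}\otimes 1_{A_2})=\sum_{k,l}(x_k\otimes u_l)\otimes(y_k\otimes v_l)$, and applying the multiplication of the tensor algebra, $(a\otimes b)(a'\otimes b')=aa'\otimes bb'$, yields $\sum_{k,l}x_ky_k\otimes u_lv_l=\big(\sum_k x_ky_k\big)\otimes\big(\sum_l u_lv_l\big)=\omega_{A_1}\otimes\omega_{A_2}$, the second identity.

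Both computations are routine; the only point that needs care is the bookkeeping of the canonical identifications — in particular keeping the transposition $\tau$ in the right place in the tensor-product case — and verifying beforehand that the candidate coproducts are genuine $A$-bimodule maps, so that the notation $\omega_{A_1\times A_2}$ and $\omega_{A_1\otimes A_2}$ is meaningful. I do not expect any serious obstacle.
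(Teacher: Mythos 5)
Your proof is correct and takes essentially the same approach as the paper: both identify the componentwise coproduct on $A_1\times A_2$ and the transposition-twisted coproduct $\tau\circ(\Delta_{A_1}\otimes\Delta_{A_2})$ on $A_1\otimes A_2$, and then compute $m\circ\Delta(1)$ directly. The only cosmetic difference is that the paper cites Remarks 6 and 7 of its reference [AGM2] for the form of these coproducts, whereas you construct them and verify the bimodule property yourself.
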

\begin{proof}
If we consider $A = A_1 \times A_2$, by Remark 6 of \cite{AGM2}, we have that $\de(1)=\sum(a_1,0)\otimes (a_2,0)+\sum (0,b_1)\otimes (0,b_2)\in A\otimes A$. Then 
$$\omega_{A_1\times A_2}=\sum (a_1a_2,0)+\sum(0,b_1b_2)$$
By the other hand, $\omega_{A_1}=\sum a_1a_2$ and $\omega_{A_2}=\sum b_1b_2$. Therefore
$$\omega_{A_1\times A_2}=\left(\sum a_1a_2,0\right)+\left(0,\sum b_1b_2\right)=\bigl(\omega_{A_1},\omega_{A_2}\bigr).$$
Now, if we consider $A=A_1 \otimes A_2$, again by Remark 7 of \cite{AGM2}, we have that 
$$\de_A:=\tau\circ \bigl(\de_{A_1}\otimes\de_{A_2}\bigr) :A_1\otimes A_2\rightarrow (A_1\otimes A_2)\otimes (A_1\otimes A_2)$$ where $\tau: (A_1\otimes A_1)\otimes(A_2\otimes A_2)\rightarrow (A_1\otimes A_2)\otimes (A_1\otimes A_2)$ it the transposition map. Then

$$\begin{array}{rcl}
\omega_A=m_A\circ\de_A&=&\bigl(m_{A_1}\otimes m_{A_2}\bigr)\circ \tau^{-1}\circ \tau\circ \bigl(\de_{A_1}\otimes\de_{A_2}\bigr)\\
        &=&\bigl(m_{A_1}\otimes m_{A_2}\bigr)\circ\bigl(\de_{A_1}\otimes\de_{A_2}\bigr)\\
        &=&\bigl(m_{A_1}\circ\de_{A_1}\bigr)\otimes\bigl(m_{A_2}\circ\de_{A_2}\bigr)\\
        &=&\omega_{A_1}\otimes\omega_{A_2}.
\end{array}$$
\end{proof}

\begin{prop}
		If $(A,\de)$ is a nearly Frobenius $R$-algebra and $\omega$ is its handle element, then for all $u\in Z(A)$, $u\omega$ is the handle element of $(A,u\ast\de)$.
	\end{prop}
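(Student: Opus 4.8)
The plan is to unwind both the hypothesis and the conclusion directly from the definitions. First I would note that $(A,u\ast\de)$ really is a nearly Frobenius algebra: by the preceding Remark the map $u\ast\de$ belongs to the Frobenius space $\mathcal E_A$, i.e.\ it is a morphism of $A$-bimodules $A\to A\ot_R A$, so the handle element $\omega_{A,u\ast\de}=m\circ(u\ast\de)(1)$ is well defined.

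Next I would evaluate $(u\ast\de)(1)$. Writing $\de(1)=\sum_i x_i\ot y_i$, the formula from that Remark gives
$$(u\ast\de)(1)=(1\ot u)\de(1)=\sum_i x_i\ot uy_i.$$
Since $u\in Z(A)$ this equals $\sum_i x_i\ot y_iu=\de(1)(1\ot u)=\de(1\cdot u)=\de(u)$, where the penultimate equality uses that $\de$ is a morphism of right $A$-modules. Hence $(u\ast\de)(1)=\de(u)$.

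Finally I would apply the multiplication and use that the handle operator $m\circ\de$ is a right $A$-module homomorphism (indeed $m\circ\de(xa)=m\bigl(\de(x)(1\ot a)\bigr)=\bigl(m\circ\de(x)\bigr)a$). This gives
$$\omega_{A,u\ast\de}=m\circ\de(u)=m\circ\de(1\cdot u)=\bigl(m\circ\de(1)\bigr)u=\omega_{A,\de}\,u=u\,\omega,$$
the last equality because $\omega=\omega_{A,\de}$ is central. This is the desired identity.

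No step here is a genuine obstacle; the one point to watch is that the centrality of $u$ is used exactly to pass from $(1\ot u)\de(1)$ to $\de(u)$ — dropping it leaves one with $\sum_i x_iuy_i$, which in general is not $u\omega$. Equivalently, one can bypass the identification with $\de(u)$ and compute directly $m\circ(u\ast\de)(1)=m\bigl(\sum_i x_i\ot uy_i\bigr)=\sum_i x_iuy_i=u\sum_i x_iy_i=u\omega$, invoking $u\in Z(A)$ at the final step.
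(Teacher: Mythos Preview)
Your proposal is correct and follows essentially the same approach as the paper; the direct computation you give at the very end (\,$m\circ(u\ast\de)(1)=\sum_i x_iuy_i=u\sum_i x_iy_i=u\omega$ using $u\in Z(A)$\,) is exactly the paper's proof, and your detour through the identification $(u\ast\de)(1)=\de(u)$ is a harmless variant that lands in the same place.
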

\begin{proof}
	If $\de(1)=\sum x_1\otimes x_2$, then $u\ast\de(1)=\sum x_1\otimes ux_2$. Therefore $$\omega_{u\ast\de}=\sum x_1ux_2=\sum ux_1x_2=u\sum x_1x_2=u\omega.$$
\end{proof}

\begin{cor}
	If $(A,\de)$ is a nearly Frobenius commutative $R$-algebra and $\omega$ is its handle element, then for all $u\in A$, $u\omega$ is the handle element of the nearly Frobenius algebra $(A,u\ast\de)$.
\end{cor}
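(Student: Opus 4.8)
The plan is to deduce the Corollary directly from the preceding Proposition. First I would observe that since $A$ is commutative one has $Z(A)=A$, so every element $u\in A$ is automatically central. Thus the hypothesis ``$u\in Z(A)$'' appearing in the Proposition is vacuous in this setting, and the Proposition applies verbatim: for every $u\in A$, the element $u\omega$ is the handle element of $(A,u\ast\de)$.

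For completeness I would also point out that the statement is meaningful, i.e. that $(A,u\ast\de)$ really is a nearly Frobenius algebra: by the Remark preceding this section, $u\ast\de$ is again an $A$-bimodule homomorphism $A\to A\otimes_R A$, so it defines a nearly Frobenius structure on $A$. The underlying computation is the same one-line argument as in the Proposition: writing $\de(1)=\sum x_1\otimes x_2$, we have $(u\ast\de)(1)=\sum x_1\otimes ux_2$, and applying the multiplication $m$ gives $\omega_{A,u\ast\de}=\sum x_1 u x_2=u\sum x_1 x_2=u\omega$, where the middle equality now uses commutativity of $A$ rather than the centrality of $u$.

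There is essentially no obstacle: the Corollary is just the specialization of the Proposition to the commutative case, in which the centrality requirement on $u$ is automatically fulfilled.
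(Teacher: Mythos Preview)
Your proposal is correct and matches the paper's approach exactly: the corollary is stated without proof in the paper, being an immediate specialization of the preceding proposition to the commutative case where $Z(A)=A$. Your additional remarks (that $u\ast\de$ is indeed a nearly Frobenius structure, and the explicit recomputation using commutativity) are sound but more detailed than what the paper deems necessary.
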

The next result appears in \cite{CIM} in the context of  Frobenius algebras.   
\begin{thm}\label{theo1}
	Let $(A, \de)$ be a nearly Frobenius $R$-algebra. If its handle element is a unit then $A$ is separable.
\end{thm}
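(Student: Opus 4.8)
The plan is to recall that an $R$-algebra $A$ is separable precisely when the multiplication map $m : A \otimes_R A \to A$ splits as a morphism of $A$-bimodules, equivalently when there exists a \emph{separability idempotent}: an element $e = \sum u_i \otimes v_i \in A \otimes_R A$ such that $\sum a u_i \otimes v_i = \sum u_i \otimes v_i a$ for all $a \in A$ and $\sum u_i v_i = 1_A$. So the whole task reduces to producing such an $e$ out of the nearly Frobenius data together with the invertibility hypothesis.

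The natural candidate is $e := (\omega^{-1} \otimes 1)\,\de(1)$, where $\omega = \omega_{A,\de} = m \circ \de(1)$ is the handle element, which by hypothesis is invertible, and is central by the Remark following the definition of the handle element (so $\omega^{-1}$ is central too). First I would check the bimodule/balancing condition: writing $\de(1) = \sum x_i \otimes y_i$, the fact that $\de$ is a morphism of $A$-bimodules gives $\sum a x_i \otimes y_i = \sum x_i \otimes y_i a$ for all $a$, and multiplying the first tensor factor by the \emph{central} element $\omega^{-1}$ preserves this identity; hence $\sum a \omega^{-1} x_i \otimes y_i = \sum \omega^{-1} x_i \otimes y_i a$, i.e. $a \cdot e = e \cdot a$. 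Second, I would apply $m$ to $e$: $m(e) = \sum \omega^{-1} x_i y_i = \omega^{-1} \sum x_i y_i = \omega^{-1} \omega = 1_A$, using centrality of $\omega^{-1}$ again. Therefore $e$ is a separability idempotent and $A$ is separable.

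There is essentially no hard step here; the only thing to be slightly careful about is the placement of $\omega^{-1}$ and the repeated use of its centrality, and the fact that we only ever need $\de$ evaluated at $1_A$ (the bimodule property of $\de$ at $1$ is exactly the statement $\sum x_i \otimes y_i$ is $A$-central after pushing $a$ across, which is all that is invoked). One should also note $\omega \in Z(A)$ being a unit of the (possibly noncommutative) ring $A$ forces $\omega^{-1} \in Z(A)$: if $\omega u = u \omega = 1$ and $\omega$ is central, then for any $a$, $u a = u a \omega u = u \omega a u = a u$. With that remark in place the argument is complete, and it is worth observing that this recovers the classical fact for Frobenius algebras referenced from \cite{CIM} as the special case where $\de$ is in addition coassociative and counital.
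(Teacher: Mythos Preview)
Your proof is correct and follows essentially the same route as the paper: both define $e=\omega^{-1}\sum x_i\otimes y_i$, use centrality of $\omega^{-1}$ to verify $a\cdot e=e\cdot a$ and $m(e)=1$, and conclude separability via the separability idempotent (the paper quotes this characterization as Proposition~19 of \cite{AGM2}). Your explicit verification that $\omega^{-1}\in Z(A)$ is a small bonus the paper states without proof.
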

\begin{proof}
If $\de(1)=\sum x_i\otimes y_i$, then $\omega_A=\sum x_iy_i$. As $\omega_A\in Z(A)$ we have that $\omega_A^{-1}\in Z(A)$. Then, defining $e=\omega_A^{-1}\sum_ix_i\otimes y_i$ we obtain that:
\begin{enumerate}
	\item $x\cdot e=x\omega_A^{-1}\sum_ix_i\otimes y_i=\omega_A^{-1}x\sum_ix_i\otimes y_i=\omega_A^{-1}\sum_ix_i\otimes y_ix=e\cdot x.$
	\item $m(e)=m\left(\omega_A^{-1}\sum_ix_i\otimes y_i\right)=\omega_A^{-1}\sum x_iy_i=\omega_A^{-1}\omega_A=1$.
\end{enumerate}
Then, by Proposition 19 of \cite{AGM2}, we conclude that $A$ is separable.
\end{proof}

Some examples of applications of Theorem \ref{theo1} are given below.

\begin{example}\label{matrix}
	Let be $A$ the matrix algebra $M_{n\times n}(\Bbbk)$. We consider the canonical basis of $A$, $\mathcal{B}=\bigl\{E_{ij}:\; i,j=1,\dots,n\bigr\}$.
	Then $\mathcal{A}=\bigl\{\de_{kj}:\; k,l=1,\dots,n\bigr\}$ is a basis of $\mathcal{E}_A$ where
	$$\de_{kl}(E_{ij})=E_{ik}\otimes E_{lj}, \;\forall k,l=1,\dots,n.$$ 
	For this family of coproducts their corresponding handle elements are
	$$\omega_{kl}=m\circ\de_{kl}(I)=m\circ\de_{kl}\Bigl(\sum_{i} E_{ii}\Bigr)=\sum_{i}m\circ \de_{kl}\bigl(E_{ii}\bigr)=\sum_{i}m\bigl(E_{ik}\otimes E_{li}\bigr)= \begin{cases} 0 & \text{ if } l\neq k, \\ I & \text{ if } k=l. \end{cases}$$
	Then $\omega_{kk} =I$, $\forall k=1,\dots,n$ and $\de_{kk}$ is a normalized nearly Frobenius coproduct (see Definition 14 of \cite{AGM2}). \\
	The coproduct $\displaystyle{\de_0=\sum_{i=1}^n \de_{ii}}$ corresponds to the Frobenius structure of $M_{n\times n}(\Bbbk)$ and $$\omega_0=m\circ\de_0(I)=\sum_{i,k=1}^nE_{ik}E_{ki}=\sum_{i,k=1}^nE_{ii}=\sum_{k=1}^nI=nI.$$
In particular, by Theorem \ref{theo1},  we conclude that $A$ is a separable algebra.
	
\end{example}

\begin{example}
	Let $G$ be a cyclic finite group of order $n$ and the group algebra $\k G$, with the natural basis $\bigl\{g^i:\;i=1,\dots,n\bigr\}$. A basis of the Frobenius space is
	$$\ma{B}=\bigl\{\de_k:\k G\rt \k G\ot \k G: k\in{1,\dots, n}\bigr\},$$
	where $\displaystyle{\de_1(1)=\sum_{i=1}^ng^i\ot g^{n+1-i}}$ and $\displaystyle{\de_k\bigl(1\bigr)=\sum_{i=1}^{k-1}g^i\ot g^{k-i}+\sum_{i=k}^ng^i\ot g^{n+k-i}}$ for $k=2,\dots, n$. In this case we have that $$\omega_k=ng^k, \;\forall k=1,\dots, n.$$
	Again, we can conclude that this algebra is separable.
\end{example}

\begin{rem}
 
The converse of Theorem \ref{theo1} is not fulfilled, consider $\de_{kl}$ with $k\neq l$ in Example \ref{matrix} as a counterexample. Even if we asked $\omega\neq 0$, the converse of Theorem \ref{theo1} would still fail, just look at the following example to check it.
\end{rem}

\begin{example}
	Let $R$ be a commutative ring and $A=R\oplus R$ be the ring direct sum of two copies of $R$. Let $e_1 = (1, 0)$ and $e_2 = (0, 1)$ be the orthogonal idempotents in $A$. We prove that $A$ is separable over $R$. For this we consider the element $e=e_1\otimes e_1+e_2\otimes e_2\in A\otimes A$, it is a separability element for $A$:
$$m(e)=m\bigl(e_1\otimes e_1+e_2\otimes e_2\bigr)=m\bigl(e_1\otimes e_1\bigr)+m\bigl(e_2\otimes e_2\bigr)=e_1+e_2=1_A.$$
As $R$ is a commutative ring and the tensor product is over $R$ it is clear that $r e=e r$, for all $r\in R$.
Therefore $A$ is separable over $R$.\\
On the other hand we study the nearly Frobenius structures of $A$. It is not difficult to prove that a general nearly Frobenius coproduct has the expression
$$\de(1)=ae_1\ot e_1+be_2\ot e_2,\;\forall a,b\in R.$$
In particular the Frobenius structure is $\de_0(1)=e_1\ot e_1+e_2\ot e_2$, and $\varepsilon(e_1)=\varepsilon(e_2)=1$.\\
The handle element associated to the Frobenius structure is $$\omega_{\de_0}=m\bigl(\de_0(1)\bigr)=e_1+e_2=1_A.$$
The handle element $\omega_{\de_0}$ is a unit of $A$. But, if we fix the nearly Frobenius structure $\de_1(1)=e_1\ot e_1$ on $A$, then the handle element associated $\omega_{\de_1}=e_1$ is not a unit of $A$. 
 \end{example}

If we adapt Theorem \ref{theo1} we can get an equivalence as follows:

\begin{thm}\label{teo10}
An algebra $A$ admits a nearly Frobenius structure where the handle element is a unit of $A$ if and only if $A$ is separable.  
\end{thm}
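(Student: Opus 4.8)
The plan is to observe that one implication is already available and to supply only the converse. The forward direction --- if $A$ carries a nearly Frobenius structure whose handle element is a unit, then $A$ is separable --- is exactly Theorem \ref{theo1}, so nothing new is needed there; the whole point of the statement is to add the reverse implication, turning the sufficient condition into a characterization.

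For the converse I would start from the characterization of separability used in Proposition 19 of \cite{AGM2}: if $A$ is separable over $R$, there is a separability element $e=\sum_i x_i\otimes y_i\in A\otimes_R A$ with $a\cdot e=e\cdot a$ for every $a\in A$ (that is, $\sum_i ax_i\otimes y_i=\sum_i x_i\otimes y_i a$) and $m(e)=\sum_i x_i y_i=1_A$. The key idea is that such an $e$ is already essentially the value at $1$ of a nearly Frobenius coproduct: I would define $\Delta\colon A\to A\otimes_R A$ by $\Delta(a)=a\cdot e=\sum_i ax_i\otimes y_i$.

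Then I would check the two things required. First, that $\Delta$ is a homomorphism of $A$-bimodules: the left-module identity $\Delta(ba)=(ba)\cdot e=b\cdot\Delta(a)$ is immediate, and the right-module identity follows by moving $b$ across $e$,
$$\Delta(ab)=(ab)\cdot e=a\cdot(b\cdot e)=a\cdot(e\cdot b)=(a\cdot e)\cdot b=\Delta(a)\cdot b,$$
so $(A,\Delta)$ is a nearly Frobenius algebra. Second, that its handle element is
$$\omega_{A,\Delta}=m\circ\Delta(1_A)=m(e)=\sum_i x_i y_i=1_A,$$
which is trivially a unit. Together with Theorem \ref{theo1} this gives the equivalence.

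I do not expect a genuine obstacle: the substance of the argument is just the remark that a separability element and a ``unital'' nearly Frobenius coproduct are the same datum, read in two different ways. The only spot that needs a moment of care is the right-bimodule-map verification for $\Delta$, since that is exactly where the commutation property $e\cdot b=b\cdot e$ of the separability element is consumed; if one only knew, say, that $m(e)=1_A$ without the centrality of $e$, the map $a\mapsto a\cdot e$ would fail to be a bimodule morphism and the construction would not produce a nearly Frobenius structure.
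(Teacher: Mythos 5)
Your proof is correct, and the forward direction is handled exactly as in the paper (by invoking Theorem \ref{theo1}). For the converse the paper takes a shortcut: it simply cites Theorem 22 of \cite{AGM3}, which asserts that a separable algebra admits a \emph{normalized} nearly Frobenius coproduct $\de$ (i.e.\ $m\circ\de=\operatorname{Id}_A$), and then reads off $\omega_\de=m\bigl(\de(1)\bigr)=1$. What you have done instead is to reprove the relevant part of that cited theorem from scratch: starting from a separability element $e$ with $a\cdot e=e\cdot a$ and $m(e)=1_A$, you define $\Delta(a)=a\cdot e$, verify the bimodule property using the centrality of $e$, and note that $m\circ\Delta=\operatorname{Id}_A$, so the handle element is $1_A$. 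The two arguments are mathematically the same construction --- a separability element and a normalized nearly Frobenius coproduct are equivalent data, which is also the content of Proposition 19 of \cite{AGM2} used in Theorem \ref{theo1} --- but your version is self-contained and makes the mechanism explicit, whereas the paper's is shorter at the cost of an external reference. Your closing remark correctly identifies where the centrality of $e$ is consumed (the right $A$-linearity of $\Delta$); no gap.
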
	
\begin{proof}
	$(\Rightarrow)$ It is Theorem \ref{theo1}.\\
	$(\Leftarrow)$ Applying Theorem 22 of \cite{AGM3} we have that $A$ admits a normalized nearly Frobenius coproduct $\de$, that is $m\circ\de=\operatorname{Id}_A$. Then $\omega_\de=m\bigl(\de(1)\bigr)=1\in A$. Therefore $\omega_\de$ is a unit of $A$.
\end{proof}

\begin{example}
	Let $A$ be the truncated polynomial algebra in one variable $\Bbbk[x]/x^{n+1}$. In this case, a  basis of the Frobenius space is $\ma{B}=\{\de_0,\dots,\de_n\}$, where $$\de_k(1)=\sum_{i+j=n+k}x^i\otimes x^j,\;\forall k\in\{0,1,\dots,n\}.$$ We can determine the handle elements associated to the nearly Frobenius coproducts described.
	$$\omega_k=\sum_{i+j=n+k}x^{i+j}=0,\;\forall k=1,\dots,n$$
	and $$\omega_0=\sum_{i+j=n}x^{i+j}=(n+1)x^n.$$
	Note that the only coproduct that admits a non zero handle element is the Frobenius coproduct.   Using Theorem \ref{teo10} we can conclude that if $n>0$ $A$ is not separable.
\end{example}
\vspace{0.3cm}

\subsection{Handle element of algebras over fields}
We will now study some properties of the handle element in the particular case of $\k$-algebras, where $\k$ is a field.

\begin{prop}\label{prop1}
	Let be $A$ a nearly Frobenius algebra of finite dimension over $\k$ and $\omega$ its corresponding handle element. If $\omega$ is not a unit, then there exist $k\in\mathbb{N}$ such that $\omega^k\in\operatorname{soc}(A)$. 	
\end{prop}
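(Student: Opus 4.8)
The plan is to exploit the fact that $\omega$ is central, together with the structure theory of finite-dimensional algebras over a field. Since $\omega\in Z(A)$, the subalgebra $\Bbbk[\omega]\subset A$ it generates is a commutative finite-dimensional $\Bbbk$-algebra, so $\omega$ satisfies some polynomial relation; write $p(t)=t^m q(t)$ with $q(0)\neq 0$ for the appropriate factorization of (a suitable) annihilating polynomial of $\omega$. First I would argue that $\omega$ lies in the Jacobson radical $J(A)$: indeed, for any simple $A$-module $S$, the central element $\omega$ acts on $S$ as multiplication by a scalar (by a Schur-type argument, or because $\mathrm{End}_A(S)$ is a division algebra containing the image of $\omega$); if that scalar were nonzero for every simple $S$, then $\omega$ would act invertibly on $A/J(A)$, and a standard lifting argument (units lift modulo the nilpotent ideal $J(A)$ in a finite-dimensional algebra) would force $\omega$ to be a unit in $A$, contradicting the hypothesis. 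Hence $\omega$ annihilates at least one simple module; I would actually want it to annihilate \emph{every} simple module, which is where the hypothesis "$\omega$ not a unit" has to be used more carefully — see the obstacle below.

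Granting that $\omega\in J(A)$, the rest is soft: $J(A)$ is nilpotent, say $J(A)^N=0$, so $\omega^N=0\in\operatorname{soc}(A)$, and we are done with any $k\geq N$. In fact one can say something slightly sharper and more in the spirit of Proposition~2.3.4 of \cite{A0}: one shows that once $\omega^k$ is small enough in the radical filtration, it is killed by $J(A)$ on both sides, i.e. $J(A)\cdot\omega^k=\omega^k\cdot J(A)=0$, which is exactly the statement $\omega^k\in\operatorname{soc}(A)$ (the socle being the set of elements annihilated by the radical). So the key steps, in order, are: (1) use centrality of $\omega$ to see it acts as a scalar on each simple module; (2) use that these scalars cannot all be nonzero (else $\omega$ would be a unit, lifting from $A/J(A)$) to conclude $\omega\in J(A)$; (3) use nilpotency of $J(A)$ to get $\omega^k=0\in\operatorname{soc}(A)$ for $k$ large.

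The main obstacle is step (2) as I have phrased it: the honest dichotomy is only that $\omega$ is a unit iff $\omega$ acts invertibly on every simple, so "$\omega$ not a unit" gives me a \emph{single} simple $S_0$ on which $\omega$ acts as $0$, not that $\omega$ acts as $0$ on \emph{all} simples — so a priori $\omega\notin J(A)$ and $\omega$ need not be nilpotent. The fix is to pass to the central primitive idempotents $e_1,\dots,e_r$ decomposing $1=e_1+\cdots+e_r$; then $\omega=\sum_i \omega e_i$, and on the block $Ae_i$ the element $\omega e_i$ is either a unit of $Ae_i$ or lies in $J(Ae_i)$, hence in the latter case $(\omega e_i)^{k_i}=0$. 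Taking $k=\max_i k_i$, the power $\omega^k=\sum_i(\omega e_i)^k$ is, in each block, either the central idempotent $e_i$ (on the blocks where $\omega$ was invertible) or $0$; so $\omega^k$ is a central idempotent $e:=\sum_{i\in S}e_i$, and $\omega^{2k}=\omega^k$. This $\omega^k=e$ need not be in $\operatorname{soc}(A)$ on the invertible blocks unless those blocks are semisimple — so I would reexamine whether the statement as written is really claiming $\omega^k\in\operatorname{soc}(A)$ unconditionally, or whether the hypothesis/conclusion should be read blockwise; most likely the intended proof assumes $A$ connected (indecomposable as an algebra) or interprets the claim so that on any block where $\omega$ fails to be a unit one gets nilpotency, giving $\omega^k\in\operatorname{soc}$ there, and I would present the argument in that sharpened form, flagging the connected/blockwise hypothesis explicitly.
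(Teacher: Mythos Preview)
Your argument treats $\omega$ purely as a central non-unit and attempts to force $\omega\in J(A)$; as you yourself diagnose, this step fails without a connectedness hypothesis, and your blockwise repair leaves $\omega^k$ equal to a central idempotent supported on the ``invertible'' blocks, which you cannot place in $\operatorname{soc}(A)$. This is a genuine gap: the statement is asserted for arbitrary finite-dimensional $A$, and what is missing is any use of the nearly Frobenius structure beyond the centrality of $\omega$. For a generic central non-unit the conclusion is simply false (take $A=\Bbbk[s]/s^2\times\Bbbk[t]/t^2$ and the central idempotent $(1,0)$), so some extra input from $\Delta$ is unavoidable.

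The paper's route is different and sidesteps the question of whether $\omega\in J$ altogether. The key observation is that $\Delta$ is an $A$-bimodule map, so for $x\in J$ one has $\Delta(x)=x\Delta(1)\in J\otimes A$ and $\Delta(x)=\Delta(1)x\in A\otimes J$; over a field this forces $\Delta(J)\subset (J\otimes A)\cap(A\otimes J)=J\otimes J$. Applying $m$ gives $x\omega=m\Delta(x)\in J^2$ for every $x\in J$, i.e.\ $J\omega\subset J^2$. Iterating (the same argument with $J$ replaced by the two-sided ideal $J^r$ gives $J^r\omega\subset J^{2r}$) yields $J\omega^k=0$ once the relevant power of $J$ vanishes, and then $\omega^k\in\operatorname{soc}(A)$ by the description of the socle as the annihilator of $J$. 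Note that the hypothesis ``$\omega$ not a unit'' plays no real role in this argument. Your blockwise picture can in fact be salvaged---on any block where $\omega$ restricts to a unit, Theorem~\ref{teo10} makes that block separable, hence semisimple, so the idempotent does lie in the socle---but this is a considerably more circuitous path than the paper's direct one-line use of $\Delta(J)\subset J\otimes J$.
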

\begin{proof}
	If $\de$ is a nearly Frobenius coproduct in $A$ it is an $A$-bimodule morphism. Then, if $A$ has finite dimension $\de(J)\subset J\ot J$, where $J$ is the Jacobson radical of $A$.\\ If we define $\psi=m\circ\de|_J:J\rt J^2$ we obtain that $\psi(x)=x\omega$. Composing the function $\psi$ with itself $m$ times we obtain that $\psi^m:J\rt J^{2m}$ is $\psi^m(x)=x\omega^m$.\\
	As $A$ is a finite dimensional algebra, there exists $k$ such that $J^{2k}=\{0\}$, in particular the map $\psi^k=0$. Then $\psi^k(x)=x\omega^k=0$\; $\forall x\in J$.\\
	As $\omega$ is not a unit we have that $$J\omega^k=\{0\}\Rightarrow \omega^k\in\operatorname{soc}(A).$$
\end{proof}

\begin{prop}\label{prop3}
	Let be $A$ a connected finite dimensional nearly Frobenius  path algebra  which is not a single vertex over $\k$ ($\k$ algebraically closed) and $\omega$ its corresponding handle element. Then $\omega\in J$, where $J$ is the Jacobson radical of $A$. 	
\end{prop}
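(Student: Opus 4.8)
The plan is to use the quiver presentation $A=\k Q/I$ with $I$ admissible. Then $J$, the Jacobson radical of $A$, is the span of (the classes of) the paths of positive length, $A/J\cong\prod_{x\in Q_0}\k$ since $\k$ is algebraically closed, and the projection $A\to A/J$ records exactly the coefficients of the trivial paths $e_x$ $(x\in Q_0)$ in the path-basis expansion. So $\omega\in J$ amounts to saying that for every vertex $x$ the coefficient of $e_x$ in $\omega$ is zero, and I will establish this vertex by vertex.

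The first step identifies that coefficient in terms of $\Delta(1)$. Writing $\Delta(1)=\sum_i u_i\otimes v_i$, one has $\omega=m\circ\Delta(1)=\sum_i u_iv_i$; since multiplying paths concatenates them and thus adds lengths, a product of two path-basis elements contributes to the trivial path $e_x$ only when both factors equal $e_x$, so the coefficient of $e_x$ in $\omega$ equals the coefficient of $e_x\otimes e_x$ in $\Delta(1)$; denote this scalar by $c_x$, so that the goal becomes $c_x=0$ for all $x\in Q_0$. Next I exploit that $\Delta$ is an $A$-bimodule map. This gives $\Delta(e_x)=e_x\Delta(1)e_x\in e_xA\otimes Ae_x$, whose $e_x\otimes e_x$-coefficient is still $c_x$, and $\Delta(1)=\sum_{x\in Q_0}\Delta(e_x)$; it also gives, for any arrow $\alpha$ with $\st(\alpha)=x$ and $\tg(\alpha)=y$, the chain $\Delta(\alpha)=\alpha\Delta(1)=\Delta(1)\alpha$, which, since $\alpha\,\Delta(e_z)=0$ for $z\neq x$ and $\Delta(e_z)\,\alpha=0$ for $z\neq y$, collapses to the identity $\alpha\,\Delta(e_x)=\Delta(e_y)\,\alpha$ in $A\otimes A$. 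In this identity I compare the coefficient of the basis element $\alpha\otimes e_x$. The left side $\alpha\,\Delta(e_x)$ is a sum of terms $(\alpha p)\otimes q$; for such a term to hit $\alpha\otimes e_x$ one needs $q=e_x$ and $\alpha p=\alpha$, forcing $p=e_x$ (a longer $p$ makes $\alpha p$ too long), so the total contribution is exactly $c_x$. The right side $\Delta(e_y)\,\alpha$ is a sum of terms $p'\otimes(q'\alpha)$, and each $q'\alpha$ lies in $A\alpha\subseteq J$, hence has no $e_x$-component and $\alpha\otimes e_x$ never occurs there; so this coefficient is $0$. Therefore $c_x=0$, and the mirror comparison on the coefficient of $e_y\otimes\alpha$ yields $c_y=0$; thus $c_v=0$ for every vertex $v$ which is an endpoint --- source or target --- of some arrow.

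To conclude: since $Q$ is connected and is not a single vertex, no vertex is isolated, so every vertex is an endpoint of some arrow, whence $c_x=0$ for all $x\in Q_0$, that is $\omega\in J$. I expect the one genuinely delicate point to be the coefficient bookkeeping in the middle step --- above all the assertion that on the right-hand side of $\alpha\,\Delta(e_x)=\Delta(e_y)\,\alpha$ no $e_x$ can occur in the second tensor factor, which rests on $q'\alpha\in J$ --- while the passage to the quiver presentation, the bimodule identities and the final connectedness remark are routine.
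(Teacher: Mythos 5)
Your argument is correct and is essentially the paper's own proof: both reduce the claim to showing that $e_x\otimes e_x$ cannot occur in $\Delta(e_x)$, and both rule this out by applying the bimodule identity to an arrow incident to $x$ and comparing path lengths on the two sides of $\alpha\,\Delta(e_x)=\Delta(e_y)\,\alpha$. The only differences are cosmetic: you use the opposite composition convention (so the offending term is $\alpha\otimes e_x$ rather than $e_i\otimes\alpha$), and you make explicit the cancellation and coefficient bookkeeping that the paper leaves implicit.
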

\begin{proof}
Consider the path of length zero associated to the vertex $i$, $e_i$. Then $\Delta(e_i)=\sum_{j}a_j\otimes b_j$ with $s(b_j)=t(a_j)=e_i$. Suppose $e_i\otimes e_i$ is a summand in $\Delta (e_i)$. Since $A$ is connected there is at least one arrow that starts or ends in $i$. Let us suppose that $\alpha$ starts on $i$ and ends in $k$ (the other case is analogous). Then $\Delta(\alpha)=\Delta(e_i)\alpha=\alpha\Delta(e_k)$ so $e_i\otimes \alpha$ is a summand in $\Delta(\alpha)$ but cannot be of the form $\alpha v\otimes u$ with $u,v$ some paths and this is absurd. Then the elements of the form $e_i\otimes e_i$ do not appear in $\Delta(e_i)$ and so do not appear in $\Delta(1)=\sum_i\Delta(e_i)$ which implies that every summand of $\omega$ is a path of positive lenght and then $\omega\in J$.  
\end{proof}

\begin{rem}
In the hypothesis of the above proposition we conclude that $\omega$ is nilpotent.
\end{rem}

\subsection{Families of nearly Frobenius algebras with zero handle element}
In this section we describe some families of algebras that admit nontrivial nearly Frobenius structures but whose corresponding handle elements are null.

\begin{example}
 A family of algebras where we can find its handle element was presented in 5.3 of \cite{AGL15}.\\
 If $\displaystyle{A=\frac{\Bbbk C}{I_C}}$, where $C=C\bigl(n_1,n_2,\dots, n_m\bigr)$, with $m>1$ and $I_C=\bigl\langle\alpha_{n_m}^{m}\alpha_1^1, \alpha_{n_i}\alpha_{1}^{i+1}, i=1,\dots, m-1\bigr\rangle$,
$$\scalebox{0.6}{\includegraphics{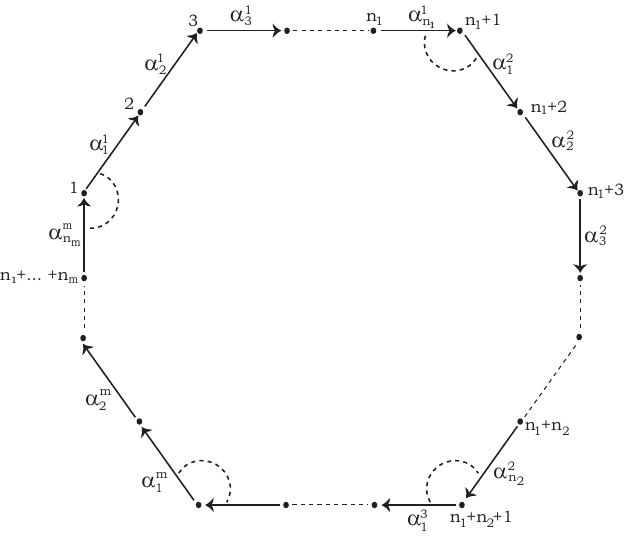}}$$
by the description of  nearly coproduct in Theorem 5 of \cite{AGL15} we conclude that $\omega=0$.\\

	 
\end{example}

\begin{prop} \label{sinlazos}
Let $A = \frac{\Bbbk Q}{I}$ be an finite dimensional algebra. If every cycle of $Q$ is $0$ in $A$, then every handle element of $A$ is $0$. 
\begin{proof}
Suppose there is a coproduct $\de$ on $A$, such that $\omega_\de=m\circ\de (1) \not = 0$. Then, there is a vertex $p \in Q_0$ such that $m\circ\de (e_p) \not = 0$. Note that for every nearly Frobenius coproduct $\de$ and vertex $p \in Q_0$,  $\de (e_p) = \sum a_i\alpha_i \ot \beta_i$ where $\tg(\beta_i) = \st(\alpha_i) = p, \forall i$. So we have an index $i$ such that $\alpha_i \beta_i \not = 0$ in $A$, in particular $\st(\beta_i) = \tg(\alpha_i)$. Then $\alpha_i \beta_i $ is a non zero cycle. 
\end{proof}
\end{prop}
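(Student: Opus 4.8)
The plan is to argue by contradiction: suppose some nearly Frobenius coproduct $\de$ on $A$ has $\omega_\de = m\circ\de(1)\neq 0$, and extract a nonzero cycle of $Q$ in $A$. Since $1_A=\sum_{p\in Q_0}e_p$ with the $e_p$ orthogonal idempotents, $\omega_\de=\sum_{p}m\circ\de(e_p)$. First I would exploit that $\de$ is an $A$-bimodule map: $\de(e_p)=\de(e_pe_p)=e_p\,\de(e_p)\,e_p$, so $\de(e_p)\in e_pA\otimes_\k Ae_p$ and hence $m\circ\de(e_p)\in (e_pA)(Ae_p)\subseteq e_pAe_p$. Because the subspaces $e_pAe_p$ are pairwise orthogonal inside $A$ (if $x=e_pxe_p=e_qxe_q$ with $p\neq q$ then $x=e_pe_qxe_qe_p=0$), the sum $\omega_\de=\sum_p m\circ\de(e_p)$ is "block diagonal", so $\omega_\de\neq 0$ forces $m\circ\de(e_p)\neq 0$ for some vertex $p$; fix such a $p$.

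Next I would expand $\de(e_p)$ in a basis of $A$ given by residues of paths, $\de(e_p)=\sum_i a_i\,\alpha_i\otimes\beta_i$ with $a_i\in\k^{\times}$, where by the localisation just obtained $\st(\alpha_i)=p=\tg(\beta_i)$ for every $i$ (following the paper's conventions for $\st,\tg$). Applying $m$ gives $m\circ\de(e_p)=\sum_i a_i\,\alpha_i\beta_i$, and since this is nonzero there is an index $i$ with $\alpha_i\beta_i\neq 0$ in $A$; in particular the product $\alpha_i\beta_i$ is a well-defined path, so $\st(\beta_i)=\tg(\alpha_i)$, and combined with $\st(\alpha_i)=\tg(\beta_i)=p$ this means $\alpha_i\beta_i$ is a closed path based at $p$. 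If $\alpha_i\beta_i$ has positive length it is an honest cycle of $Q$ which is nonzero in $A$, contradicting the hypothesis.

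The one remaining possibility — and the only real obstacle — is $\alpha_i=\beta_i=e_p$, i.e.\ that the trivial term $e_p\otimes e_p$ occurs in $\de(e_p)$ and contributes a nonzero multiple of $e_p$ to $m\circ\de(e_p)$. To rule this out I would argue exactly as in the proof of Proposition~\ref{prop3}: choosing an arrow $\gamma$ incident to $p$ (say $\st(\gamma)=p$, the other case being symmetric), the identity $\de(\gamma)=\de(e_p)\gamma$ would force a summand $e_p\otimes\gamma$ in $\de(\gamma)$ with first tensor factor of length $0$, whereas $\de(\gamma)=\gamma\,\de(e_{\tg(\gamma)})$ shows every first tensor factor of $\de(\gamma)$ is a path of the form $\gamma v$, hence of positive length; these cannot coincide, so $e_p\otimes e_p$ does not appear in $\de(e_p)$. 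Thus every nonzero term of $m\circ\de(e_p)$ is a genuine nonzero cycle of $Q$, contradicting the hypothesis that every cycle is $0$ in $A$; therefore $\omega_\de=0$, and since $\de$ was arbitrary every handle element of $A$ vanishes.
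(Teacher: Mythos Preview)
Your proof is correct and follows the same contrapositive route as the paper: localize to a vertex $p$ with $m\circ\de(e_p)\neq 0$, use the bimodule property to place $\de(e_p)$ in $e_pA\otimes Ae_p$, and extract a nonzero closed path $\alpha_i\beta_i$ at $p$. You are in fact more careful than the paper's own argument, which does not explicitly exclude the degenerate term $e_p\otimes e_p$; your appeal to the argument of Proposition~\ref{prop3} closes that gap (and implicitly uses that some arrow is incident to $p$, which is needed anyway since the statement fails for $A=\Bbbk$).
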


The following results are obtained from the previous proposition.

\begin{cor}
	If $A$ a toupie algebra over a field $\Bbbk$ then every handle element is $0$.
\begin{proof}
Since toupie algebras are acyclic the result follows immediately from Proposition \ref{sinlazos}.
\end{proof}	
\end{cor}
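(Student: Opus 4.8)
The statement to prove is the corollary that a toupie algebra over a field has every handle element equal to zero, and the plan is to deduce it directly from Proposition \ref{sinlazos}. So the entire task reduces to verifying the hypothesis of that proposition: namely, that a toupie algebra is finite dimensional and that every cycle of its quiver $Q$ vanishes in $A$. In fact, since a toupie algebra has a unique source $0$ and a unique sink $\omega$, and every intermediate vertex has exactly one incoming and one outgoing arrow, the quiver $Q$ is \emph{acyclic}: any oriented cycle would have to return to a vertex it already visited, but starting from the source one can only move forward toward the sink, and no arrows leave the sink nor enter the source. Hence there are no oriented cycles at all in $Q$.

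First I would observe that $A = \Bbbk Q / I$ with $I$ admissible, so $A$ is finite dimensional (indeed $Q$ being finite and acyclic already forces $\Bbbk Q$ itself to be finite dimensional). Then I would note that the condition ``every cycle of $Q$ is $0$ in $A$'' from Proposition \ref{sinlazos} is vacuously satisfied, because $Q$ has no oriented cycles whatsoever. Applying Proposition \ref{sinlazos} then immediately gives that every handle element of $A$ is $0$.

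The only real content, and the only place where one might want a sentence of justification rather than a bare citation, is the claim that toupie quivers are acyclic. I would justify it as follows: suppose $c = \alpha_1 \cdots \alpha_r$ were an oriented cycle, say based at a vertex $x$. The vertex $x$ cannot be the source $0$ (nothing enters $0$) nor the sink $\omega$ (nothing leaves $\omega$), so $x$ is an intermediate vertex; by definition it has a unique outgoing arrow and a unique incoming arrow. Following the cycle forward from $x$, each step moves to a new vertex, and since there are finitely many vertices the path must eventually reach the sink $\omega$ before it can close up — but once at $\omega$ there is no way to continue, contradicting that $c$ returns to $x$. Hence no such cycle exists. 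I do not anticipate any genuine obstacle here; the corollary is essentially a one-line consequence of Proposition \ref{sinlazos} together with the elementary combinatorial fact that toupie quivers are acyclic, which is already the phrasing used in the excerpt (``Since toupie algebras are acyclic the result follows immediately from Proposition \ref{sinlazos}'').
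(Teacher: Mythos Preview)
Your proposal is correct and follows exactly the paper's approach: invoke Proposition \ref{sinlazos} after observing that toupie quivers are acyclic, so the cycle hypothesis is vacuous. You supply more justification for acyclicity than the paper does (the paper simply asserts it); your argument there is slightly informal at the step ``each step moves to a new vertex,'' but the claim is elementary and correct (a cycle would consist entirely of intermediate vertices, each with its unique in- and out-arrow on the cycle, forcing the cycle to be a full connected component disjoint from $0$ and $\omega$, contradicting connectedness of $Q$).
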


\begin{cor}
	If $A$ is a radical square zero algebra different to the loop then every handle element is $0$. 
\begin{proof}
If every cycle has length bigger to or equal to $2$ the result follows from Proposition \ref{sinlazos}.  By Remark 2 of \cite{AGM1} if the indegree and the outdegree of a vertex $p$ are both bigger to or equal to $2$ we have that $m \circ \Delta (e_p) = 0$, so we can suppose that for a vertex $p$ there is a loop $\alpha$ at $p$ and arrows $\beta_i$ that starts at $p$. Again by Remark 2 of \cite{AGM1} $\Delta (e_p) =  \sum b_i \beta_i \otimes \alpha$. Thus $m \circ \Delta (e_p) = \sum b_i \beta_i \alpha = 0$ since $A$ is a radical square zero algebra.     
\end{proof}	
\end{cor}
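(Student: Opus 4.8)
The plan is to push everything through Proposition \ref{sinlazos} as far as it goes, and to handle the vertices carrying a loop separately via the known description of nearly Frobenius coproducts on radical square zero algebras. Write $A=\Bbbk Q/J^2$ and let $\de$ be an arbitrary nearly Frobenius coproduct on $A$; since $\omega_\de=m\circ\de(1)=\sum_{p\in Q_0}m\circ\de(e_p)$, it suffices to prove $m\circ\de(e_p)=0$ for every vertex $p$.

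First I would dispose of the situation in which $Q$ has no loop at all: then every oriented cycle of $Q$ has length $\geq 2$, hence lies in $J^2$ and vanishes in $A$, so Proposition \ref{sinlazos} applies directly. More generally, the computation in the proof of Proposition \ref{sinlazos} already gives $m\circ\de(e_p)=0$ for every vertex $p$ that carries no loop: writing $\de(e_p)=\sum a_i\,\gamma_i\ot\delta_i$ with $\tg(\delta_i)=\st(\gamma_i)=p$, a surviving term would force $\gamma_i\delta_i\neq0$, i.e.\ a nonzero oriented cycle through $p$, and in a radical square zero algebra such a cycle must be a loop at $p$.

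It remains to treat a vertex $p$ carrying a loop $\alpha$. Here I would invoke the description of nearly Frobenius coproducts of radical square zero algebras from Remark 2 of \cite{AGM1}. If the indegree and outdegree of $p$ are both $\geq 2$, that description already gives $m\circ\de(e_p)=0$. Otherwise --- which, by connectedness, forces $Q$ to carry some arrow at $p$ other than $\alpha$ precisely unless $Q$ is the single loop --- one may assume (up to the evident left/right symmetry) that $\de(e_p)=\sum b_i\,\beta_i\ot\alpha$ with each $\beta_i$ an arrow, so that $m\circ\de(e_p)=\sum b_i\,\beta_i\alpha\in J^2=\{0\}$. This is exactly the point at which ``$A$ different from the loop'' is used: for the one-vertex one-loop quiver there is no such $\beta_i$, the coproduct may carry a term $e_p\ot\alpha$ (or $\alpha\ot e_p$) that is not killed by $m$, and $\omega_\de$ need not vanish. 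Adding up $m\circ\de(e_p)=0$ over all $p\in Q_0$ yields $\omega_\de=0$.

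The step I expect to be genuinely delicate is the loop case: one has to be sure that the classification of \cite{AGM1} does forbid the ``mixed'' terms $e_p\ot(\text{loop})$, $(\text{loop})\ot e_p$ (and $e_p\ot e_p$) in $\de(e_p)$ for every quiver except the single loop, and to check that the indegree/outdegree trichotomy at a loop-bearing vertex leaves no other possibility.
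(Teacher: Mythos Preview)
Your proposal is correct and follows essentially the same strategy as the paper: reduce to Proposition \ref{sinlazos} for vertices without loops, and invoke Remark 2 of \cite{AGM1} at loop-bearing vertices, splitting according to the indegree/outdegree at $p$ and using $J^2=0$ to kill the remaining products $\beta_i\alpha$. You are in fact a bit more careful than the paper, making the vertex-by-vertex decomposition $\omega_\de=\sum_p m\circ\de(e_p)$ explicit, invoking connectedness to exclude the isolated-loop vertex, and flagging the left/right symmetry that the paper leaves implicit.
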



\subsection{Handle element and socle}
In the context of commutative Frobenius algebras, Theorem \ref{teo9} is valid, which allows to determine the socle of the algebra from its handle element. In this section we answer the question, is it possible to generalize this result to non commutative algebras or nearly Frobenius algebras?
\begin{thm}[Proposition 3.3 of \cite{A}]\label{teo9}
	In a commutative  Frobenius algebra $A$ with handle element $\omega$, the ideal $\omega A$ is the socle of $A$.
\end{thm}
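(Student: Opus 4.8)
The plan is to prove both inclusions $\omega A\subseteq\operatorname{soc}(A)$ and $\operatorname{soc}(A)\subseteq\omega A$, using the commutative Frobenius structure at each step. Throughout write $\Delta(1)=\sum_i x_i\otimes y_i$, so that $\omega=\sum_i x_iy_i$, and recall the defining property of the Frobenius coproduct, $\varepsilon\bigl(a(bc)\bigr)=\sum\varepsilon(a_1b)\varepsilon(a_2c)$, together with the bimodule identity $\sum ax_i\otimes y_i=\sum x_i\otimes y_ia$ for all $a\in A$. Since $A$ is commutative, $\Delta$ is automatically symmetric and $\omega$ is central (which is trivial here). A convenient reformulation I would use is that $\{x_i\}$ and $\{y_i\}$ behave as a pair of dual bases: for every $a\in A$ one has $a=\sum_i\varepsilon(ay_i)x_i=\sum_i\varepsilon(ax_i)y_i$; this is the standard consequence of $\lambda_l$ being an isomorphism.

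For the inclusion $\omega A\subseteq\operatorname{soc}(A)$, I would first observe that $\operatorname{soc}(A)=\operatorname{ann}_A(J)$ where $J$ is the Jacobson radical, so it suffices to show $J\omega=0$; but rather than invoke nilpotency of $J$ I would prefer the cleaner duality argument. Recall that in a commutative Frobenius algebra $\operatorname{soc}(A)$ is the annihilator of $J$, and $\varepsilon$ restricted to $\operatorname{soc}(A)$ is nondegenerate while $\varepsilon(J^N)=0$ for $N$ large. The key computation is to show $a\omega=\sum_i\varepsilon(a\,?\,)\cdot(\text{socle element})$; concretely, I expect $a\omega=\sum_{i}\varepsilon(a x_i) y_i\cdot(\text{something})$ — here the natural move is to use that $m\circ\Delta$ applied after a twist expresses $\omega a$ in terms of $\varepsilon$-pairings, and since $\varepsilon$ kills high powers of $J$, the element $\omega$ must annihilate $J$. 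So Step 1 is: show $J\omega=0$, hence $\omega A\subseteq\operatorname{soc}(A)$.

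For the reverse inclusion $\operatorname{soc}(A)\subseteq\omega A$, let $z\in\operatorname{soc}(A)$. Using the dual-basis expansion, $z=\sum_i\varepsilon(zx_i)y_i$. Now I would exploit that $z\in\operatorname{soc}(A)$ means $zx_i$ is again in the socle, and on the socle the Frobenius form interacts with $\Delta$ in a controlled way: the aim is to rewrite $\sum_i\varepsilon(zx_i)y_i$ as $\omega\cdot w$ for some $w$ depending on $z$. The mechanism I expect to work is that for $z$ in the socle, $\Delta(z)=z\cdot\Delta(1)=\sum zx_i\otimes y_i$ and simultaneously $\Delta(z)$ is concentrated in $\operatorname{soc}(A)\otimes A$ (by the bimodule property and $J\cdot\operatorname{soc}=0$), which forces $z$ to be a multiple of $\omega$ via the counit axiom. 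Concretely, applying $\varepsilon\otimes\operatorname{id}$ to $\Delta(z)=\Delta(1)z$ recovers $z$, but applying $m$ gives $\omega z$; the socle hypothesis should let me invert this relationship on the one-dimensional-over-$\varepsilon$ pieces. I expect the clean statement is: the map $A/J\to\operatorname{soc}(A)$, $\bar a\mapsto a\omega$, is an isomorphism, and $\operatorname{soc}(A)$ decomposes accordingly.

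The main obstacle will be the second inclusion, specifically making rigorous the claim that every socle element is actually $\omega$ times something, rather than merely that $\omega A$ has the right dimension. The dimension count is easy — $\dim\omega A=\dim A-\dim\ker(m\circ\Delta)=\dim A-\dim J=\dim(A/J)=\dim\operatorname{soc}(A)$ in the commutative case, since the handle operator $\omega(-)=m\circ\Delta$ has kernel exactly $J$ (this uses that $A$ commutative Frobenius is a product of local rings and the computation reduces to the local case where $\operatorname{soc}$ is one-dimensional and $\omega$ spans it). Given the dimension equality and the first inclusion $\omega A\subseteq\operatorname{soc}(A)$, equality follows. So the real work is establishing $\ker(m\circ\Delta)=J$, which I would do by reducing to local commutative Frobenius algebras (where $\operatorname{soc}$ is $1$-dimensional, $\omega$ is a nonzero scalar multiple of the socle generator precisely when the algebra is a field, and in general $\omega$ generates the socle by a direct check with a symmetrizing basis), and then assembling via the product decomposition, exactly as in Example 3.9's style of reasoning.
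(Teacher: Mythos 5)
The paper never actually proves this statement: it is imported as Proposition 3.3 of \cite{A}, and the only related argument in the text is the proof of Theorem \ref{teo 16}, which establishes the single inclusion $\omega A\subseteq\operatorname{soc}(A)$ (even noncommutatively) using a basis adapted to a socle series together with the fact that $\operatorname{Ker}(\varepsilon)$ contains no nonzero ideals. Measured against that, your first inclusion is the right idea but not yet an argument: ``since $\varepsilon$ kills high powers of $J$, $\omega$ must annihilate $J$'' is a non sequitur as written. What closes it is to show $\varepsilon(xJ\omega)=0$ for all $x$ (e.g.\ by the adapted-basis computation of Theorem \ref{teo 16}), note that $J\omega$ is then an ideal contained in $\operatorname{Ker}(\varepsilon)$, and invoke that the kernel of a Frobenius trace contains no nonzero ideals. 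That step needs to be written out.

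The genuine gap is in the reverse inclusion. As you say, everything reduces to $\operatorname{ann}_A(\omega)=J$, i.e.\ to $\omega\neq 0$ in each local factor of $A$ (once $\omega\in\operatorname{soc}(A)$ is known you do not even need the dimension count: the socle of a local commutative Frobenius algebra is a simple module, so any nonzero element of it generates it). But the ``direct check with a symmetrizing basis'' you defer to yields $\varepsilon(\omega)=\sum_i\varepsilon\bigl(e_ie^i\bigr)=\dim_{\k}A$, and that computation is the entire content of the step: it certifies $\omega\neq 0$ only when the characteristic does not divide the relevant dimensions. In positive characteristic the step --- and the theorem as stated, with no hypothesis on $\k$ --- fails: for $A=\k[x]/(x^2)$ over $\k=\mathbb{F}_2$ one has $\Delta(1)=1\otimes x+x\otimes 1$, hence $\omega=2x=0$ for every Frobenius structure on $A$ (by Proposition 3.4 all handle elements differ by a unit), while $\operatorname{soc}(A)=(x)\neq 0$. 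So your plan cannot be completed without a characteristic-zero (or ``$\dim_\k$ of each local factor invertible in $\k$'') hypothesis, which Abrams has implicitly and which you never surface; the parenthetical claim that ``$\omega$ is a nonzero scalar multiple of the socle generator precisely when the algebra is a field'' is also incorrect. With the hypothesis added, the local argument is short: $\omega\in\operatorname{soc}(A_i)$ by the first inclusion, $\varepsilon_i(\omega_i)=\dim_{\k}A_i\neq 0$ forces $\omega_i\neq 0$, and simplicity of $\operatorname{soc}(A_i)$ gives $\omega_iA_i=\operatorname{soc}(A_i)$.
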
 

\begin{rem}
	The next example shows that the Theorem \ref{teo9} is not valid in the case of non commutative Frobenius algebras. 
\end{rem}

\begin{example}\label{example1}
	Let be $A=\Bbbk Q/I$, where $Q=\xymatrix{
		\bullet_1\ar@/^/[r]^\alpha&\bullet_2\ar@/^/[l]^\beta}$ and $I=\langle\alpha\beta,\beta\alpha\rangle$. In this example the nearly Frobenius structures that it admits are the following
	$$\begin{array}{rcl}
		\de(e_1)&=&ae_1\otimes\beta+b\alpha\otimes e_1+c\alpha\otimes\beta\\
		\de(e_2)&=&be_2\otimes \alpha+a\beta\otimes e_2+d\beta\otimes\alpha
	\end{array}	$$ where $a,b,c,d\in\Bbbk.$
	
	A Frobenius coproduct is
	$$\de(1)=e_1\otimes\beta+\alpha\otimes e_1+e_2\otimes\alpha+\beta\otimes e_2-\alpha\otimes\beta-\beta\otimes\alpha,$$
	where the trace is defined as
	$$\varepsilon:A\rt\Bbbk,\;\varepsilon(e_1)=\varepsilon(e_2)=\varepsilon(\alpha)=\varepsilon(\beta)=1.$$
	In this case, $\operatorname{soc}(A)=\alpha A\oplus\beta A$, and $\omega=m\circ\de(1)=0.$\\ Then, the handle element does not generate the socle.
\end{example}

Given a finite dimensional algebra $A$ and its Jacobson radical $J$, for a left ideal $M \subset A$, $J M= 0 $ if and only if $M \subset \operatorname{soc}(A)$. As a consequence, if $\operatorname{soc}(A) = S_1 \subset S_2 \subset\cdots\subset S_m = A$ is a socle series, then $J S_i \subset S_{i-1}$.\\ Recall that for a Frobenius algebra $A$ we have that $\operatorname{soc}(A_A) = \operatorname{soc}( _AA)$.\\
 
In the context of non commutative Frobenius algebras we have the following result, which is a weakening of Theorem \ref{teo9}.

\begin{thm}\label{teo 16}
In a non-commutative Frobenius algebra $A$ with handle element $\omega$, the ideal $\omega A$ is a submodule of the socle of $A$.
\end{thm}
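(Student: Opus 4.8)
The plan is to reduce the statement to showing that the handle element itself lies in the socle. Indeed $\omega=\omega\cdot 1$, while conversely, for a Frobenius algebra $\operatorname{soc}(A)=\operatorname{soc}(A_A)=\operatorname{soc}({}_AA)$ is a two-sided ideal, so $\omega\in\operatorname{soc}(A)$ already forces $\omega A\subseteq\operatorname{soc}(A)$; thus it suffices to prove $\omega\in\operatorname{soc}(A)$ (we work with $A$ finite dimensional over a field $\k$). I would fix the trace $\varepsilon$ of the Frobenius structure and write the associated coproduct at the unit as a Casimir element $\Delta(1)=\sum_i x_i\ot y_i$, where $\{x_i\}$ is a $\k$-basis of $A$ and $\{y_i\}$ is the basis characterised by $\varepsilon(y_ix_j)=\delta_{ij}$ (this is the shape forced on $\Delta(1)$ by $\Delta$ being an $A$-bimodule map with counit $\varepsilon$), so that $\omega=m\Delta(1)=\sum_i x_iy_i$. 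The crucial choice is to pick $\{x_i\}$ adapted to the radical filtration $A\supseteq J\supseteq J^2\supseteq\cdots\supseteq J^m=0$, i.e. so that $\{x_i:x_i\in J^d\}$ is a basis of $J^d$ for every $d$; write $d_i$ for the largest $d$ with $x_i\in J^d$, so that $J^d=\operatorname{span}\{x_i:d_i\ge d\}$.

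The heart of the argument would be the claim that each $y_i$ lies in $\operatorname{soc}^{d_i+1}(A)$, the $(d_i+1)$-th term of the socle series. For a Frobenius algebra this series is the same whether computed on the left or on the right (the $k=1$ case is recalled just before the statement; the general case follows identically, using that the Nakayama automorphism stabilises every $J^k$), so one may use $\operatorname{soc}^k(A)=\{a:aJ^k=0\}=\{a:\varepsilon(az)=0\ \forall z\in J^k\}$, the last equality by nondegeneracy of $(a,b)\mapsto\varepsilon(ab)$ together with $J^k$ being a right ideal. Now $J^{d_i+1}$ is spanned by the $x_j$ with $d_j\ge d_i+1$, and every such $j$ is distinct from $i$, whence $\varepsilon(y_ix_j)=\delta_{ij}=0$; this proves the claim. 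Granting it, from $x_i\in J^{d_i}$, $y_i\in\operatorname{soc}^{d_i+1}(A)$ and the standard inclusions $J\cdot\operatorname{soc}^k(A)\subseteq\operatorname{soc}^{k-1}(A)$ one gets
$$x_iy_i\ \in\ J^{d_i}\operatorname{soc}^{d_i+1}(A)\ \subseteq\ \operatorname{soc}^{1}(A)=\operatorname{soc}(A)$$
for every $i$, and summing over $i$ yields $\omega=\sum_i x_iy_i\in\operatorname{soc}(A)$, hence $\omega A\subseteq\operatorname{soc}(A)$.

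The step I expect to be the real obstacle is the bookkeeping about sides. Counitality pins down the Casimir element as $\Delta(1)=\sum_i x_i\ot y_i$ with $\varepsilon(y_ix_j)=\delta_{ij}$ (not $\varepsilon(x_iy_j)=\delta_{ij}$), and in the product $x_iy_i$ the radical factor $x_i$ sits on the \emph{left} of the socle factor $y_i$; lowering its socle degree therefore genuinely needs the left--right symmetry of the socle series that is special to Frobenius algebras. Without that input one only obtains, from $\Delta(J)\subseteq J\ot J$ alone, the much weaker estimate $\omega^{m-1}\in\operatorname{soc}(A)$ already appearing in Proposition \ref{prop1}. So the genuinely new ingredients --- and the points where the full (counital, possibly non-commutative) Frobenius structure is used --- are the exact ``co-degree'' $d_i+1$ of $y_i$ and the two-sidedness of $\operatorname{soc}^k(A)$; everything else is the radical/socle-series formalism set up just before the theorem.
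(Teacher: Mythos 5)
Your argument is correct, and it is a close cousin of the paper's proof rather than a verbatim match: both reduce the theorem to showing $\omega\in\operatorname{soc}(A)$, write $\omega=\sum_i x_iy_i$ with $\{x_i\},\{y_i\}$ dual bases for the nondegenerate form $(a,b)\mapsto\varepsilon(ba)$, adapt one of the bases to a filtration of $A$, and use nondegeneracy to kill the relevant terms — but you and the paper pick opposite filtrations. The paper adapts $\{e_i\}$ to the \emph{socle} series $S_1\subset\cdots\subset S_m=A$, notes that for $x\in J$ the element $xe_i$ drops one step and is therefore a combination of basis vectors $e_j$ with $j\neq i$, deduces $\varepsilon\bigl(Je_ie_i^\#\bigr)=0$, and concludes $Je_ie_i^\#=0$ because $\ker\varepsilon$ contains no nonzero left ideal. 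You instead adapt $\{x_i\}$ to the \emph{radical} series and transfer the information to the dual basis, obtaining the exact "co-degree" $y_i\in\operatorname{ann}\bigl(J^{d_i+1}\bigr)$; the price is the extra input that the annihilator series of the powers of $J$ is left-right symmetric. Your Nakayama-automorphism justification of that symmetry is valid, though in fact only the bottom case $\operatorname{soc}(A_A)=\operatorname{soc}\bigl({}_AA\bigr)$ (already recalled in the paper before the statement) is needed: if $aJ^k=0$ and $x\in J$ then $(xa)J^{k-1}=x\bigl(aJ^{k-1}\bigr)\subseteq J\cdot\operatorname{soc}(A_A)=0$, which gives $J\cdot\operatorname{ann}\bigl(J^k\bigr)\subseteq\operatorname{ann}\bigl(J^{k-1}\bigr)$ and hence $x_iy_i\in J^{d_i}\operatorname{ann}\bigl(J^{d_i+1}\bigr)\subseteq\operatorname{soc}(A)$ directly. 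In short, the paper's route avoids the higher terms of the socle series at the cost of the "no one-sided ideals in $\ker\varepsilon$" lemma, while yours makes the degree bookkeeping explicit and isolates precisely where counitality and the left-right asymmetry enter; both are complete proofs.
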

\begin{proof}
Let $\operatorname{soc}(A) = S_1 \subset S_2 \subset\cdots \subset S_m = A$ be a socle series for $A$ as a left $A$-module. Choose a basis for $S_1$. Now, starting with $i =1$, iteratively take the basis for $S_i$ and extend it to a basis for $S_{i+1}$. Denote the elements of the basis for $S_m = A$ by $e_1,\cdots , e_n$. Suppose that  $e_i \in S_k \setminus S_{k-1}$ and $x \in J$, then $xe_i \in S_{k-1}$ and $\displaystyle{ xe_i = \sum_{j\neq i}^n \alpha_je_j}$. \\
Remember that $A$ is a Frobenius algebra if it possesses a left $A$-module isomorphism $\lambda_l:A\rt A^*$ with its dual vector space, where $A$ is viewed as the left module over itself, and $A^*$ is made a left
$A$-module by the action $(a\cdot f)(b)=f(ba)$, for all $a\in A$ and $f\in A^*$.
In this context we can construct the counit $\varepsilon:A\rt \k$ of $A$ as  $\varepsilon(a)=\lambda_l(1)(a)$, for all $a\in A$. In particular,  
 we have $e_1^\#,\dots, e_n^\#$ the corresponding dual basis elements of $A$, which verify $\varepsilon\bigl(e_ie_j^\#\bigl)=\delta_{ij}$ for all $i,j=1,\dots,n$. Then 
$$\varepsilon\bigl(xe_ie_i^\#\bigr)=\varepsilon\left(\sum_{j\neq i}\alpha_j e_je_i^\#\right)=\sum_{j\neq i}\alpha_j \varepsilon\bigl(e_je_i^\#\bigr)=0.$$
Therefore $Je_ie_i^\#\subset\operatorname{Ker}(\varepsilon)$. But, $\operatorname{Ker}(\varepsilon)$ can contain no nontrivial left ideals, then $$Je_ie_i^\#=\{0\}.$$ Thus $e_ie_i^\#\in \operatorname{soc}(A)$ for each $i$, so $\displaystyle{\omega=\sum_{i=1}^ne_ie_i^\#\in\operatorname{soc}(A)}.$

\end{proof}

The following examples show that Theorem \ref{teo 16} does not hold in the context of nearly Frobenius algebras. \\



The first two examples illustrate that there are nearly Frobenius (non Frobenius) algebras where the ideal generated by the handle element is strict contained in the socle of the algebra.
\begin{example} 
	We consider a small modification of the Example \ref{example1}, let be $A=\Bbbk Q/I$, where $Q=\xymatrix{
	\bullet_1\ar@/^/[r]^\alpha&\bullet_2\ar@/^/[l]^\beta}$ and $I=\langle\beta\alpha\rangle$. In this case we can prove that the nearly Frobenius structures are the following
$$\begin{array}{rcl}
\de(e_1)&=&a_1(e_1\otimes\alpha\beta+\alpha\beta\otimes e_1)+a_2\alpha\otimes\beta+a_3\alpha\otimes\alpha\beta+a_4\alpha\beta\otimes\beta+a_5\alpha\beta\otimes\alpha\beta\\
\de(e_2)&=&a_1\beta\otimes\alpha
\end{array}
$$ for $a_1,\dots,a_5\in\Bbbk$.\\ 
A first observation we can make is that this algebra does not support Frobenius algebra structure, because if we suppose that there exist $\varepsilon:A\rt\Bbbk$ trace of $\de$, then
$$(\varepsilon\otimes 1)\de(e_2)=a_1\varepsilon(\beta)\alpha=e_2,$$ and this is a contradiction. \\
On the other hand, we can determine the handle element for $(A,\de)$,
$$\omega=m\circ\de(1)=m\circ\de(e_1+e_2)=2a_1\alpha\beta+a_2\alpha\beta=(2a_1+a_2)\alpha\beta,$$
and the socle of $A$ is
$$\operatorname{soc}(A)=\alpha\beta A\oplus\beta A,$$ then $\omega A\subsetneq \operatorname{soc}(A).$
	\end{example}

\begin{example}[Local rings which are not Gorenstein, see \cite{K}]
	In $A = \Bbbk[x, y]/\bigl(x^2, xy^2, y^3\bigr)$, the socle is $\bigl(xy, y^2\bigr)$. This ring does not support Frobenius algebra structure but it does admit nearly Frobenius algebra structure.
	A general nearly Frobenius coproduct is
	$$\de(1)=a_1\bigl(x\ot y^2+y\ot xy+y^2\ot x+xy\ot y\bigr)+a_2\bigl(x\ot xy+xy\ot x\bigr)+a_3y^2\ot y^2+a_4y^2\ot xy$$$$+a_5xy\ot y^2+a_6xy\ot xy,$$
	where $a_i\in\Bbbk,\forall i=1,\dots, 6.$ \\
	Note that there are no scalars that make any coproduct admit a trace, therefore none can be a Frobenius coproduct.\\ On the other hand, the handle element is zero for all coproduct
	$$\omega_\de=0,\;\forall a_i\in\Bbbk, i=1,\dots,6 \Rightarrow \omega_\de A=\{0\}\subsetneq\operatorname{Soc}(A)=\bigl(xy, y^2\bigr).$$

	In the ring $A =\Bbbk[x, y, z]/\bigl(x^2, y^2, z^2, xy\bigr)$, the socle is $(xz, yz)$. As before,  this ring does not support Frobenius algebra structure but it does admit nearly Frobenius algebra structure, where the general Frobenius coproduct is generated, as a linear combination, by 
	$$\de_1(1)=x\ot xz+xz\ot x,\; \de_2(1)=x\ot yz+xz\ot y,\; \de_3(1)=y\ot xz+yz\ot x,$$ 
	$$\de_4(1)=y\ot yz+yz\ot y,\; \de_5(1)=xz\ot xz,\;\de_6(1)=xz\ot yz,$$
	$$\de_7(1)=yz\ot xz,\;\de_8(1)=yz\ot yz.$$
	In particular $\operatorname{Frobdim}A=8$, and none of the nearly Frobenius coproducts can be completed to a Frobenius coproduct. Also, all the handle elements are zero ($\omega_\de=0$). Then $\{0\}=\omega_\de A\subsetneq\operatorname{Soc}(A)=\bigl(xz, yz\bigr).$ 
\end{example}

The following example shows that Theorem \ref{teo 16} does not generalize to nearly Frobenius algebras.
\begin{example}
Let be $A=\Bbbk Q/I$, where $Q$ is $$\xymatrix{
	&\bullet_2\ar[dr]^\beta&\\
	\bullet_1\ar[ur]^\alpha&&\bullet_3\ar[ll]^{\gamma}}$$ and $I=\langle\alpha\beta\gamma\rangle$. In this example we can describe all the nearly Frobenius structures and as consequence determine the handle element 
$$\omega=a\bigl(\beta\gamma\alpha+\gamma\alpha\beta\bigr),\quad\mbox{where}\;a\in\Bbbk.$$
On the other hand, the socle of $A$ is $$\operatorname{soc}(A_A)=\gamma\alpha\beta A\oplus \beta\gamma\alpha\beta A.$$
Then $\omega A\not\subset\operatorname{soc}(A_A)$.
	\end{example}




\section{Casimir and Schur elements}

Given an algebra $A$, there is a natural isomorphism $\psi: A\otimes A^*\to \operatorname{End}_\Bbbk(A)$ such that $\psi(x\otimes \beta)(y)=\beta(x)y$. If $A$ is Frobenius then $A\cong A^*$ and we can consider an isomorphism $\tilde{\psi}:A\otimes A\to \operatorname{End}_\Bbbk(A)$.
The \emph{Casimir element} (see \cite{JL}) for $(A, \lambda)$ be a symmetric algebra is defined as the element $C_A\in A\otimes A$ that corresponds to $\operatorname{Id_A}\in \operatorname{End}_\Bbbk(A)$ under the isomorphism $\operatorname{End}_\Bbbk(A)\cong A\otimes A$.\\
If $\bigl\{e_i\bigr\}$ and $\bigl\{e^i\bigr\}$ are dual basis with respect to $\lambda$, the Casimir element is:
$$C_A=\sum e_i\otimes e^i=\sum e^i\otimes e_i.$$ 
This element satisfies that
$$\sum ae_i\otimes e^i=\sum e_i\otimes e^ia,\quad\mbox{for all}\; a\in A.$$
The \emph{central Casimir element} of $A$ is defined as  
$$Z_{A}=\sum e^ie_i.$$
\begin{rem}
	Note that the Casimir element of $A$ coincide with $\Delta(1)$, if $\Delta$ is the coproduct of this symmetric algebra (see \cite{A1}), and the central Casimir element coincide with the handle element.
\end{rem}
In the context of Frobenius algebras we can define the Casimir element in a similar way.\\
Given a basis $\{e_1,\dots,e_n\}$ of the Frobenius algebra $A$ with isomorphism $\lambda_l: A \to A^*$, let $\bigl\{e^1,\dots, e^n\bigr\}$ be the \emph{dual basis} of $A$ relative to $\lambda_l$, that means the elements satisfy $\lambda_l\bigl(e^j\bigr)(e_i)=\delta_{ij}$, for all $i,j=1,\dots,n$. In particular, if $\{e_1^*,\dots, e_n^*\}$ denotes the basis of $A^*$ satisfying $e_i^*(e_j)=\delta_{ij}$, using $\lambda_l:A\rt A^* $,  we conclude that $e^j=\lambda_l^{-1}(e_j^*)$ for all $j=1,\dots, n$.\\
In this context, if we use the isomorphism $\lambda_l: A \to A^*$ as left $A$-modules, the \emph{Casimir element}, as before, corresponds to $\operatorname{Id_A}\in \operatorname{End}_\Bbbk(A)$ under the isomorphism $\operatorname{End}_\Bbbk(A)\cong A\otimes A$. But, in this case we only have the following expression since $A$ is not necessarily symmetric:
$$C_A=\sum_{i=1}^n e^i\otimes e_i.$$

Let us recall the Proposition 2 proved in \cite{AGM3}.
\begin{prop}
	The coproduct $\Delta:A\rightarrow A\otimes A$ of the Frobenius algebra $A$ satisfies
	\begin{enumerate}
		\item[(i)] $\displaystyle{\de(1_A)=\sum_{i=1}^ne^i\otimes e_i}$. 
		\item[(ii)] $\displaystyle{\de(x)=\sum_{i=1}^nxe^i\otimes e_i=\sum_{i=1}^ne^i\otimes e_ix}$, for all $x\in A$.
	\end{enumerate}  
\end{prop}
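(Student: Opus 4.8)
The plan is to unwind the definition of the Casimir element $C_A = \sum_{i=1}^n e^i \otimes e_i$ as the preimage of $\operatorname{Id}_A$ under the isomorphism $\operatorname{End}_\Bbbk(A) \cong A \otimes A$ obtained from $\psi$ and the Frobenius isomorphism $\lambda_l$, and then to check that the coproduct $\Delta$ described by Abrams sends $1_A$ to exactly this element; part (ii) will follow from part (i) together with the fact that $\Delta$ is a morphism of $A$-bimodules.

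For (i), I would first make the identification precise: the isomorphism $\operatorname{End}_\Bbbk(A) \to A \otimes A$ is $\tilde\psi^{-1}$ where $\tilde\psi = \psi \circ (\operatorname{id}_A \otimes \lambda_l)$ and $\psi(x \otimes \beta)(y) = \beta(y)x$ (taking the convention consistent with the earlier $\psi(x\otimes\beta)(y)=\beta(x)y$ read in the appropriate slot). Writing $\operatorname{Id}_A$ in the dual-basis form, one has $\operatorname{Id}_A = \sum_i e_i \otimes e_i^* \mapsto \sum_i e_i \otimes \lambda_l^{-1}(e_i^*) = \sum_i e_i \otimes e^i$ under $\tilde\psi^{-1}$, using the relation $e^j = \lambda_l^{-1}(e_j^*)$ recorded just above the statement; one then checks $\sum_i e^i \otimes e_i = \sum_i e_i \otimes e^i$ by the defining property $\lambda_l(e^j)(e_i) = \delta_{ij}$, so $C_A$ is well-defined independently of which way the identification is written. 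To see $\Delta(1_A) = C_A$, recall that the Frobenius coproduct is characterized by $\varepsilon(x\,ab) = \sum \varepsilon(x_1 a)\varepsilon(x_2 b)$ with $\varepsilon = \lambda_l(1_A)$, or more directly: the coproduct dual to the multiplication under $\lambda_l$ must be the $A$-bimodule map $A \to A \otimes A$ whose evaluation recovers $\lambda_l$; expanding an arbitrary $x$ in the basis $\{e_i\}$ and pairing with $\{e_i^*\}$ forces $\Delta(1_A) = \sum_i e^i \otimes e_i$.

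For (ii), I would use that $\Delta$ is a homomorphism of $A$-bimodules, so $\Delta(x) = \Delta(x \cdot 1_A) = x \cdot \Delta(1_A) = \sum_i x e^i \otimes e_i$ and likewise $\Delta(x) = \Delta(1_A \cdot x) = \Delta(1_A) \cdot x = \sum_i e^i \otimes e_i x$; the two expressions agree because both equal $\Delta(x)$. The only subtlety is to confirm that the $\Delta$ produced by the Frobenius isomorphism is genuinely a bimodule map and not merely a left-module map — but this is exactly the content of the Abrams characterization quoted in the preliminaries, so it may be invoked.

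The main obstacle I anticipate is bookkeeping with conventions: making sure the left-module structure on $A^*$ (namely $(a\cdot f)(b) = f(ba)$), the formula for $\psi$, and the identification $e^j = \lambda_l^{-1}(e_j^*)$ all line up so that $C_A$ comes out as $\sum_i e^i \otimes e_i$ rather than $\sum_i e_i \otimes e^i$ in the wrong tensor slot, and so that the resulting $\Delta$ is left $A$-linear in the variable the bimodule argument needs. Once the conventions are pinned down, (i) is a one-line dual-basis computation and (ii) is immediate from bimodule linearity, so there is no serious computational difficulty beyond this setup.
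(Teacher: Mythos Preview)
Your argument for (ii) is correct and is exactly how one deduces it: once (i) is known, the bimodule property of $\Delta$ gives both expressions for $\Delta(x)$ immediately.

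The problem is in your argument for (i). You compute the preimage of $\operatorname{Id}_A$ under $\tilde\psi$ as $\sum_i e_i\otimes e^i$ and then assert that ``one then checks $\sum_i e^i\otimes e_i = \sum_i e_i\otimes e^i$ by the defining property $\lambda_l(e^j)(e_i)=\delta_{ij}$''. This equality is \emph{false} for a general Frobenius algebra; it holds precisely when $A$ is symmetric. The paper says this explicitly in the paragraph just before the proposition: ``in this case we only have the following expression since $A$ is not necessarily symmetric: $C_A=\sum_{i=1}^n e^i\otimes e_i$.'' So the step you are relying on to reconcile your convention with the statement does not go through, and as written you have not established (i) for non-symmetric $A$.

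The fix is a matter of getting the slot right rather than appealing to a symmetry you do not have. If you apply $\lambda_l$ in the \emph{first} tensor factor (so $\tilde\psi(x\otimes z)(y)=\lambda_l(x)(y)\,z$), then $\tilde\psi\bigl(\sum_i e^i\otimes e_i\bigr)(e_j)=\sum_i \lambda_l(e^i)(e_j)\,e_i=e_j$, and you land directly on $\sum_i e^i\otimes e_i$ with no swap needed. Alternatively, bypass the $\operatorname{End}_\Bbbk(A)$ identification altogether and argue from the counit: from $(\varepsilon\otimes\operatorname{id})\Delta=\operatorname{id}$ and the left $A$-linearity of $\Delta$ one gets $\sum_k \varepsilon(x c_k)d_k=x$ for $\Delta(1)=\sum_k c_k\otimes d_k$; combining this with the dual-basis expansion $c_k=\sum_i \varepsilon(e_i c_k)e^i$ (which follows from $\varepsilon(e_i e^j)=\delta_{ij}$) yields $\Delta(1)=\sum_i e^i\otimes e_i$ directly.

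Finally, note that the paper itself does not give a proof of this proposition at all: it is quoted from \cite{AGM3} (``Let us recall the Proposition 2 proved in \cite{AGM3}''), so there is no in-paper argument to compare against beyond checking correctness.
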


Then in this case, as before, the Casimir element coincides with $\Delta(1)$.

\begin{defn}
	If $A$ is a nearly Frobenius algebra, we define the \emph{Casimir element} of $A$ as $$C	_A=\Delta(1).$$
\end{defn} 
This allows us to view the properties of the Casimir element or related constructions in terms of the coproduct of the (nearly) Frobenius algebra.\\

Let $(A,\varepsilon_A)$ and $(B,\varepsilon_B)$ be Frobenius algebras and $\phi:A\twoheadrightarrow B$ an epimorphism of algebras. Note that $\varepsilon_B\circ\phi$ is in $A^{\ast}$, so there exists an element $u_0^B\in A$ such that $u_0^B\cdot\varepsilon_A=\varepsilon_B\circ\phi$. Consider $s_{\phi}=\phi\bigl(u_0^B\bigr)\in B$.

\begin{defn}
The element $s_\phi$ is called the \emph{Schur element} of the epimorphism $\phi:A\twoheadrightarrow B$.
\end{defn}

\begin{rem}
The Schur element can be describe as
$$s_\phi=\sum_{i=1}^n\varepsilon_B\bigl(\phi\bigl(e^i\bigr)\bigr)\phi\bigl(e_i\bigr):$$
Let $\displaystyle{\Delta_A(1)=\sum_{i=1}^ne^i\otimes e_i}$ with $\displaystyle{\Delta_A(a)=\sum_{i=1}^nae^i\otimes e_i=\sum_{i=1}^ne^i\otimes e_ia}\;$ for all $a\in A$.\\
As $u_0^B\in A$ we have that 
$$u_0^B=\bigl(\varepsilon_A\otimes \operatorname{Id}_A\bigr)\Delta_A\bigl(u_0^B\bigr)=\sum_{i=1}^n\varepsilon_A\bigl(u_0^Be^i\bigr)e_i=\sum_{i=1}^n\bigl(u_0^B\cdot\varepsilon_A\bigr)\bigl(e^i\bigr)e_i=\sum_{i=1}^n\bigl(\varepsilon_B\circ\phi\bigr)\bigl(e^i\bigr)e_i $$
Then
$$s_\phi=\phi\bigl(u_0^B\bigr)=\sum_{i=1}^n\bigl(\varepsilon_B\circ\phi\bigr)\bigl(e^i\bigr)\phi\bigl(e_i\bigr)=\sum_{i=1}^n\varepsilon_B\bigl(\phi\bigl(e^i\bigr)\bigr)\phi\bigl(e_i\bigr).$$
\end{rem}
The previous remark allows us to generalize the definition of Schur element in the context of nearly Frobenius algebras. We will denote $\displaystyle{\Delta(1)=\sum_{i=1}^{n} e^{i}\otimes e_{i}}$ in the nearly Frobenius case although $\bigl\{e_1, e_2, \cdots e_n\bigr\}$ and $\bigl\{e^1, e^2, \cdots e^n\bigr\}$ are not dual basis.
 
\begin{defn}
		Let $A$ be a nearly Frobenius algebra and $B$ a Frobenius algebra with $\phi: A\twoheadrightarrow B$ an epimorphism of algebras. We define the \emph{Schur element} of $\phi$, $s_\phi\in B$,  as the element
		$$s_\phi=\sum_{i=1}^n\varepsilon_B\bigl(\phi\bigl(e^i\bigr)\bigr)\phi\bigl(e_i\bigr).$$
		
\end{defn}

\begin{prop}
	Let $A$ be a nearly Frobenius algebra and $B$ a Frobenius algebra with $\phi: A\twoheadrightarrow B$ an epimorphism of algebras. Then:
	\begin{enumerate}
		\item $(\phi\otimes \phi)(C_A)=s_{\phi}C_B$,
		\item $\phi(Z_A)=s_{\phi}Z_B$,
	\end{enumerate}
	with $C_A=\sum_{i=1}^n e^i\otimes e_i$ the Casimir element of $A$ and $Z_{A}=\sum_{i=1}^n e^ie_i$ the central Casimir element of $A$. \end{prop}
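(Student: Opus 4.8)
The plan is to compute both sides directly from the definitions, using the key structural fact that the coproduct $\Delta_B$ of the Frobenius algebra $B$ is a morphism of $B$-bimodules and satisfies $\Delta_B(b) = b\,C_B = C_B\,b$ for all $b\in B$ (Proposition on the coproduct of Frobenius algebras, applied to $B$). First I would write $C_A = \Delta_A(1) = \sum_i e^i\otimes e_i$, so that
$$(\phi\otimes\phi)(C_A) = \sum_{i=1}^n \phi(e^i)\otimes\phi(e_i).$$
For the first identity, I would insert a "resolution of the identity" for $B$: since $\phi(e^i)\in B$, I can expand it using $\Delta_B$. Concretely, write $\Delta_B(1_B) = \sum_j f^j\otimes f_j$ with $\{f_j\},\{f^j\}$ dual bases of $B$; then $\phi(e^i) = \sum_j \varepsilon_B(\phi(e^i) f^j) f_j$ and hence $\phi(e^i)\otimes\phi(e_i)$ can be rewritten and reassembled. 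The cleaner route, which I would use, is: apply the bimodule property of $\Delta_B$ to move $\phi(e^i)$ across; i.e.
$$\phi(e^i)\otimes\phi(e_i) \ \leftrightarrow\ (\varepsilon_B\otimes\operatorname{Id}_B\otimes\operatorname{Id}_B)\bigl(\phi(e^i)\,\Delta_B(1_B)\otimes\phi(e_i)\bigr)$$
but in fact the slickest argument is to observe that $\sum_i \phi(e^i)\otimes\phi(e_i)$ lands in $B\otimes B$ and use the $A$-bimodule property of $\Delta_A$ pushed through $\phi$.

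Actually, the efficient argument I would carry out is the following. Using part (ii) of the Frobenius coproduct proposition for $B$ and that $\Delta_B(1_B)=\sum_j f^j\otimes f_j$ is dual bases, I have the general principle $b = \sum_j \varepsilon_B(b f^j) f_j$ for $b\in B$. Apply this to $b=\phi(e^i)$ and substitute:
$$(\phi\otimes\phi)(C_A) = \sum_{i,j} \varepsilon_B\bigl(\phi(e^i) f^j\bigr)\, f_j\otimes \phi(e_i).$$
Now I want to recognize the coefficient. The Schur element is $s_\phi = \sum_i \varepsilon_B(\phi(e^i))\phi(e_i)$, so I would like to collapse the $i$-sum. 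Here I would use that $\Delta_A$ is an $A$-bimodule map, so $\sum_i a e^i\otimes e_i = \sum_i e^i\otimes e_i a$ for the *Frobenius* part — but $A$ is only nearly Frobenius, so I only have $\sum_i x e^i\otimes e_i = \sum_i e^i\otimes e_i x$ as the bimodule identity for $\Delta_A$; pushing through $\phi$ gives $\sum_i \phi(a)\phi(e^i)\otimes\phi(e_i) = \sum_i \phi(e^i)\otimes\phi(e_i)\phi(a)$ for all $a\in A$, hence (by surjectivity of $\phi$) $\sum_i b\,\phi(e^i)\otimes\phi(e_i) = \sum_i \phi(e^i)\otimes\phi(e_i)\,b$ for all $b\in B$. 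Tensoring the $\varepsilon_B$-duality trick with this, I can slide $f^j$ around and reorganize: $\sum_i\varepsilon_B(\phi(e^i)f^j)\phi(e_i)$, when I apply $\varepsilon_B(\,-\,f^j)\otimes\operatorname{Id}$ to the bimodule identity — wait, more carefully, I would apply $\varepsilon_B\otimes\operatorname{Id}_B$ to $\sum_i f^j\phi(e^i)\otimes\phi(e_i) = \sum_i\phi(e^i)\otimes\phi(e_i)f^j$, which gives $\sum_i\varepsilon_B(f^j\phi(e^i))\phi(e_i) = \sum_i\varepsilon_B(\phi(e^i))\phi(e_i)f^j = s_\phi f^j$. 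Then
$$(\phi\otimes\phi)(C_A) = \sum_j f_j\otimes s_\phi f^j = \sum_j f_j\otimes f^j s_\phi$$
using symmetry of $B$ to commute $s_\phi$ past — no, I should be careful whether $B$ is symmetric; the statement only says $B$ Frobenius. So I would instead keep $\sum_j f_j \otimes s_\phi f^j$ and recognize it: by the $B$-bimodule property of $\Delta_B$, $\sum_j f_j\otimes s_\phi f^j$ — hmm, the sides are $\sum f^j\otimes f_j$ with the action, so I'd match this to $\Delta_B(s_\phi) = s_\phi\,\Delta_B(1_B) = s_\phi C_B = s_\phi\sum_j f^j\otimes f_j$. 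The ordering of tensor factors needs one careful check against the convention $C_A = \sum e^i\otimes e_i$ (note dual bases satisfy $\sum e^i\otimes e_i = \sum e_i\otimes e^i$ for symmetric, but here I must track which factor is which).

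For the second identity $\phi(Z_A) = s_\phi Z_B$, I would simply apply the multiplication map $m_B$ to the first identity: $m_B\circ(\phi\otimes\phi) = \phi\circ m_A$ since $\phi$ is an algebra map, so $\phi(m_A(C_A)) = \phi(Z_A)$, while $m_B(s_\phi C_B) = s_\phi\,m_B(C_B) = s_\phi Z_B$ because $s_\phi$ is... central? $Z_B$ is the handle element of $B$, which is central, but $s_\phi$ need not be; however $m_B(s_\phi C_B) = s_\phi m_B(C_B)$ holds because $s_\phi$ was factored out on the left before multiplying. So part (2) is an immediate corollary of part (1), and I would present it as such. The main obstacle I anticipate is bookkeeping: getting the tensor-factor order and the left-versus-right module actions consistent between the $\sum e^i\otimes e_i$ convention for $C_A$, the nearly-Frobenius bimodule identity $\sum x e^i\otimes e_i = \sum e^i\otimes e_i x$, and the Frobenius coproduct formula $\Delta_B(x) = \sum x f^j\otimes f_j = \sum f^j\otimes f_j x$ for $B$ — a sign-free but error-prone chase. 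One should double-check at the end by testing on $A = B$ symmetric, where $s_{\mathrm{id}} = 1$ and both identities become tautologies.
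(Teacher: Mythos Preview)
Your core idea is right and is essentially the paper's: push the nearly-Frobenius bimodule identity $\sum_i a e^i\otimes e_i=\sum_i e^i\otimes e_i a$ through $\phi$, then combine it with the Frobenius trace identity on $B$. Your reduction of (2) to (1) by applying $m_B$ is exactly what the paper does.

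The gap is in the execution of (1), and you in fact put your finger on it. You expand $\phi(e^i)=\sum_j \varepsilon_B\bigl(\phi(e^i)f^j\bigr)f_j$, producing coefficients $\varepsilon_B\bigl(\phi(e^i)f^j\bigr)$; but the bimodule identity, after applying $\varepsilon_B\otimes\operatorname{Id}$, yields $\sum_i \varepsilon_B\bigl(f^j\phi(e^i)\bigr)\phi(e_i)=s_\phi f^j$, with the factors inside $\varepsilon_B$ in the \emph{other} order. Since $B$ is only Frobenius, not symmetric, you cannot swap them, so the two computations do not connect. You sensed this (``I should be careful whether $B$ is symmetric'') but did not fix it. A direct fix is to use the other counit identity $b=\sum_j f^j\,\varepsilon_B(f_j b)$ (coming from $(\operatorname{Id}\otimes\varepsilon_B)\circ\Delta_B=\operatorname{Id}$), which gives
\[
(\phi\otimes\phi)(C_A)=\sum_{i,j} f^j\,\varepsilon_B\bigl(f_j\phi(e^i)\bigr)\otimes\phi(e_i)=\sum_j f^j\otimes\Bigl(\sum_i\varepsilon_B\bigl(f_j\phi(e^i)\bigr)\phi(e_i)\Bigr)=\sum_j f^j\otimes s_\phi f_j,
\]
now with matching orders.

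The paper avoids this bookkeeping entirely by working through the isomorphism $\psi:B\otimes B\to\operatorname{End}_R(B)$, $\psi(x\otimes y)(b)=\varepsilon_B(bx)\,y$, available precisely because $B$ is Frobenius. One has $\psi(C_B)=\operatorname{Id}_B$, and then computes directly
\[
\psi\bigl((\phi\otimes\phi)(C_A)\bigr)(b)=\sum_i\varepsilon_B\bigl(b\,\phi(e^i)\bigr)\phi(e_i)=\sum_i\varepsilon_B\bigl(\phi(e^i)\bigr)\phi(e_i)\,b=s_\phi\, b,
\]
the middle equality being exactly your bimodule step. Since $\psi$ is an isomorphism, this determines $(\phi\otimes\phi)(C_A)$ and one writes the answer as $s_\phi C_B$. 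All the left/right tensor-factor juggling that tripped you up is absorbed into $\psi$.
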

\begin{proof}
Consider the map  $\psi:B\otimes_{R} B\to End_{R}(B)$ as in the definition of the Casimir element:
	given $x\otimes y\in B\otimes_{R} B$,  $\psi(x\otimes y)$ is the map in $End_{R}(B)$ that sends $b\in B$ to $\varepsilon_B(bx)y$. 
	If $C_B=\sum f^i\otimes f_i$ then
	$$\psi(C_B)(b)=\sum \varepsilon_B(bf^i)f_i=b$$
	and  $\psi(C_B)=Id_B.$ \\
	Let us now prove that $ \psi(\phi\otimes \phi)(C_A)=s_{\phi}Id_B$:
	\begin{align*}
		\psi(\phi\otimes \phi)(C_A)(b)&=\psi\Bigl(\sum \phi(e^i)\otimes \phi(e_i)\Bigr)(b)\\&=\sum\varepsilon_B(b\phi(e^i))\phi(e_i)\\&=\sum\varepsilon_B(\phi(a)\phi(e^i))\phi(e_i)\\&=\sum\varepsilon_B(\phi(ae^i))\phi(e_i) \end{align*}
	Since $A$ is a nearly Frobenius algebra $$\Delta(a)=\sum ae^i\otimes e_i=\sum e^i\otimes e_ia$$ and $$(\phi\otimes \phi)\Bigl(\sum ae^i\otimes e_i\Bigr)=\sum \phi(ae^i)\otimes \phi(e_i)=\sum \phi(e^i)\otimes \phi(e_ia).$$ Then applying $\varepsilon_B \otimes 1$ we obtain $$\sum\varepsilon_B(\phi(e^i))\phi(e_ia)=\sum\varepsilon_B(\phi(ae^i))\phi(e_i)$$ and $$ \psi(\phi\otimes \phi)(C_A)(b)=s_{\phi}b.$$\bigbreak
	To prove \emph{(2)} observe that
	$$\phi(Z_A)=\phi\Bigl(\sum e^ie_i\Bigr)=\sum\phi(e^i)\phi(e_i)=m\circ (\phi\otimes \phi)(C_A)=m\circ(s_{\phi}C_B)=s_{\phi}Z_B.$$ 
\end{proof}

	As the central Casimir element of $A$ is the handle element of $A$ the previous proposition  can be written as follows.

\begin{prop}
		Let $A$ be a nearly Frobenius algebra and $B$ a Frobenius algebra with $\phi: A\twoheadrightarrow B$ an epimorphism of algebras. Then:
	\begin{enumerate}
		\item $(\phi\otimes \phi)(\Delta_A(1_A))=s_{\phi}\Delta_B(1_B)$.
		\item $\phi(\omega_A)=s_{\phi}\omega_B$.
	\end{enumerate}
\end{prop}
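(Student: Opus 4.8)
The plan is to derive this statement directly as a corollary of the previous proposition by unwinding two identifications that the paper has already established. First I would recall, as noted in the remark following the definition of the Casimir element, that for a (nearly) Frobenius algebra the Casimir element $C_A$ coincides with $\Delta_A(1_A)$ and the central Casimir element $Z_A=\sum e^i e_i$ coincides with the handle element $\omega_A=m\circ\Delta_A(1)$. The same identifications hold for $B$, which is an honest Frobenius algebra, so $C_B=\Delta_B(1_B)$ and $Z_B=\omega_B$.

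With these substitutions in hand, part (1) of the new proposition is simply part (1) of the preceding proposition rewritten: $(\phi\otimes\phi)(C_A)=s_\phi C_B$ becomes $(\phi\otimes\phi)(\Delta_A(1_A))=s_\phi\Delta_B(1_B)$. Likewise part (2) of the new proposition is part (2) of the preceding one, since $\phi(Z_A)=s_\phi Z_B$ reads $\phi(\omega_A)=s_\phi\omega_B$ after replacing the central Casimir elements by the handle elements. So the proof is essentially one line: ``This follows from the previous proposition, using that $C_A=\Delta_A(1_A)$, $Z_A=\omega_A$, $C_B=\Delta_B(1_B)$ and $Z_B=\omega_B$.''

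The only point that deserves a word of care — and the closest thing to an obstacle — is making sure the identification $Z_A=\omega_A$ is legitimate in the \emph{nearly} Frobenius setting, not just the symmetric Frobenius one where it was first stated. But this is immediate from the definitions adopted in Section 4: the Casimir element of a nearly Frobenius algebra is \emph{defined} to be $\Delta(1)=\sum e^i\otimes e_i$, the central Casimir element is $Z_A=\sum e^i e_i=m(\Delta(1))$, and the handle element is $\omega_{A,\Delta}=m\circ\Delta(1)$; these coincide by construction. So no genuine computation is required, and the proof can be presented as:

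\begin{proof}
By the remark following the definition of the Casimir element and the convention adopted in the nearly Frobenius case, we have $C_A=\Delta_A(1_A)$ and $Z_A=m(C_A)=\omega_A$, and similarly $C_B=\Delta_B(1_B)$ and $Z_B=\omega_B$ since $B$ is a Frobenius algebra. Substituting these equalities into the two identities of the previous proposition yields exactly $(\phi\otimes\phi)(\Delta_A(1_A))=s_\phi\Delta_B(1_B)$ and $\phi(\omega_A)=s_\phi\omega_B$.
\end{proof}
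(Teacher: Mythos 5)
Your proposal is correct and matches the paper exactly: the paper gives no separate proof of this proposition, merely prefacing it with the remark that the central Casimir element is the handle element (and the Casimir element is $\Delta(1)$ by definition in the nearly Frobenius case), so the statement is just the previous proposition rewritten. Your additional care about the identification $Z_A=m(C_A)=\omega_A$ in the nearly Frobenius setting is sound and consistent with the conventions adopted in Section 4.
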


\begin{lem}
Let $A$ be a nearly Frobenius algebra and $B$ a Frobenius algebra with $\phi: A\twoheadrightarrow B$ an epimorphism of algebras. Then the Schur element $s_\phi$  is central in $B$. 
\end{lem}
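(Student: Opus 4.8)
The plan is to exploit the key identity proved in the preceding proposition, namely $(\phi\otimes\phi)(\Delta_A(1_A)) = s_\phi\,\Delta_B(1_B)$, together with the bimodule character of $\Delta_B$ and the symmetry of $\Delta_B(1_B)$ coming from $B$ being Frobenius. Write $\Delta_A(1) = \sum_i e^i\otimes e_i$, so that $s_\phi = \sum_i \varepsilon_B(\phi(e^i))\phi(e_i)$. The heart of the matter is to show that for every $b\in B$ one has $b\,s_\phi = s_\phi\,b$, and the natural way to access $s_\phi$ multiplicatively is to recover it by applying $\varepsilon_B\otimes\mathrm{Id}_B$ to $(\phi\otimes\phi)(C_A)$, just as $u_0^B = (\varepsilon_A\otimes\mathrm{Id}_A)\Delta_A(u_0^B)$ in the earlier remark.

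First I would fix $b\in B$ and, using surjectivity of $\phi$, pick $a\in A$ with $\phi(a) = b$. Then I would use the two expressions for $\Delta_A(a)$ available because $A$ is nearly Frobenius, $\Delta_A(a) = \sum_i a e^i\otimes e_i = \sum_i e^i\otimes e_i a$, and push them through $\phi\otimes\phi$ to get $\sum_i \phi(ae^i)\otimes\phi(e_i) = \sum_i \phi(e^i)\otimes\phi(e_i a)$ in $B\otimes B$. Applying $\varepsilon_B\otimes\mathrm{Id}_B$ to this equality gives $\sum_i \varepsilon_B(b\,\phi(e^i))\,\phi(e_i) = \sum_i \varepsilon_B(\phi(e^i))\,\phi(e_i)\,b = s_\phi\,b$. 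So the problem reduces to identifying the left-hand side with $b\,s_\phi$. For that I would pass to the Casimir element of $B$: since $\psi(\phi\otimes\phi)(C_A) = s_\phi\,\mathrm{Id}_B$ was established in the proof of the previous proposition, evaluating at $b$ already gives $\sum_i \varepsilon_B(b\,\phi(e^i))\,\phi(e_i) = s_\phi\, b$. Comparing the two computations shows $b\,s_\phi$ has not yet appeared — so the argument must instead go through the \emph{other} slot.

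The clean route is: on one hand $s_\phi\,b = \sum_i \varepsilon_B(b\,\phi(e^i))\phi(e_i)$ (shown above), and on the other hand I want $b\,s_\phi = \sum_i \varepsilon_B(\phi(e^i))\,b\,\phi(e_i)$. To bridge these I would use that $B$ is a \emph{Frobenius} algebra, hence $\Delta_B$ is counital with counit $\varepsilon_B$ and $C_B = \Delta_B(1) = \sum_j f^j\otimes f_j$ satisfies $\sum_j \varepsilon_B(b f^j) f_j = b = \sum_j f^j \varepsilon_B(f_j b)$ and also, by the bimodule property, $\sum_j b f^j\otimes f_j = \sum_j f^j\otimes f_j b$. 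Now rewrite $b\,s_\phi$ using $s_\phi = \psi(\phi\otimes\phi)(C_A)$ applied appropriately, or more directly: expand $b\phi(e_i) = \sum_j \varepsilon_B(b\phi(e_i) f^j) f_j$ via counitality of $\Delta_B$, substitute, swap the order of summation, and recombine the $e$-sum back into $s_\phi$ using the identity $\sum_i \varepsilon_B(\phi(e^i))\varepsilon_B(\phi(e_i)c) = \varepsilon_B(s_\phi c)$ for all $c\in B$; matched against the symmetric expansion of $s_\phi b$ this yields $\varepsilon_B((b s_\phi - s_\phi b)c) = 0$ for all $c\in B$, and since $B$ is Frobenius the pairing $(x,c)\mapsto\varepsilon_B(xc)$ is nondegenerate, forcing $b s_\phi = s_\phi b$.

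The main obstacle I anticipate is bookkeeping the nondegeneracy step correctly: one has to be careful that $A$ being only \emph{nearly} Frobenius means $\{e_i\},\{e^i\}$ are genuinely not dual bases, so $\varepsilon_A$ may not even exist, and all counital manipulations must be performed on the $B$ side, never on the $A$ side. Concretely, every time I want to "collapse" a sum I must route it through $\varepsilon_B$ and $\Delta_B$, using $(\phi\otimes\phi)(C_A) = s_\phi C_B$ to transfer the $A$-data into $B$ where the Frobenius structure is available. Once that discipline is maintained, the computation is a short diagram chase; the nondegeneracy of $\varepsilon_B(xy)$ then delivers centrality, and I would remark that this is exactly the point where Frobeniusness of $B$ (not merely near-Frobeniusness) is essential.
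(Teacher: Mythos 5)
Your first half is sound and coincides with the paper's computation: pushing $\sum_i ae^i\otimes e_i=\sum_i e^i\otimes e_i a$ through $\phi\otimes\phi$ and applying $\varepsilon_B\otimes\operatorname{Id}_B$ gives $\sum_i\varepsilon_B\bigl(b\phi(e^i)\bigr)\phi(e_i)=s_\phi b$ for $b=\phi(a)$, and you correctly observe that this route (and the map $\psi$ from the preceding proposition) only ever produces $s_\phi b$, never $bs_\phi$. The problem is that your proposed bridge does not close the gap. After expanding $b\phi(e_i)=\sum_j\varepsilon_B\bigl(b\phi(e_i)f^j\bigr)f_j$, recombining $\sum_i\varepsilon_B\bigl(\phi(e^i)\bigr)\varepsilon_B\bigl(b\phi(e_i)f^j\bigr)$ into $\varepsilon_B\bigl(s_\phi c_j\bigr)$ for some $c_j$ independent of $i$ forces you to move $b$ past $\phi(e_i)$ inside $\varepsilon_B$, i.e.\ to use $\varepsilon_B(xy)=\varepsilon_B(yx)$. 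That is symmetry of $B$, which is not among the hypotheses; the same obstruction blocks the final nondegeneracy step, since $\varepsilon_B\bigl(bs_\phi c\bigr)=\sum_i\varepsilon_B\bigl(\phi(e^i)\bigr)\varepsilon_B\bigl(b\phi(e_i)c\bigr)$ cannot be matched with $\varepsilon_B\bigl(s_\phi bc\bigr)=\sum_i\varepsilon_B\bigl(\phi(e^i)\bigr)\varepsilon_B\bigl(\phi(e_i)bc\bigr)$ without commuting $b$ through the trace form.

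This is not a repairable bookkeeping issue: under the stated hypotheses the conclusion fails. Take $A=B=M_2(\Bbbk)$ with $\varepsilon_B=\operatorname{tr}$, $\phi=\operatorname{Id}$, and the coproduct $\de_{11}$ of Example \ref{matrix}, so $\Delta_A(1)=E_{11}\otimes E_{11}+E_{21}\otimes E_{12}$. Then $s_\phi=\operatorname{tr}\bigl(E_{11}\bigr)E_{11}+\operatorname{tr}\bigl(E_{21}\bigr)E_{12}=E_{11}$, which is not central, even though the identity $\sum_i\varepsilon_B\bigl(b\phi(e^i)\bigr)\phi(e_i)=s_\phi b$ does hold. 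The paper's own proof does not fare better: it invokes $(\phi\otimes\phi)(C_A)=s_\phi C_B$ from the previous proposition, but the computation there only shows that $\psi\bigl((\phi\otimes\phi)(C_A)\bigr)$ is \emph{left} multiplication by $s_\phi$, whereas $\psi\bigl(s_\phi C_B\bigr)$ is \emph{right} multiplication by $s_\phi$; identifying the two is exactly the centrality one is trying to prove, and in the example above $(\phi\otimes\phi)(C_A)=\sum_i E_{i1}\otimes E_{1i}\neq s_\phi C_B=\sum_k E_{1k}\otimes E_{k1}$. To make the statement true one must add symmetry hypotheses (for instance $\tau\circ\Delta_A(1)=\Delta_A(1)$ and $\varepsilon_B$ symmetric, the setting of Proposition \ref{prop2}), so your closing remark that Frobeniusness of $B$ is the essential point should be sharpened: what is essential, and missing, is symmetry.
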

\begin{proof}
	Using that $\phi$ is an epimorphism it is enough to prove that $s_\phi\phi(a)=\phi(a)s_\phi$ for all $a\in A$.
$$\begin{array}{rclc}
	s_\phi\phi(a)&=& \displaystyle{\sum\varepsilon_B\bigl(\phi\bigl(e^i\bigr)\bigr)\phi\bigl(e_i\bigr)\phi(a)}&\\
	&=&\displaystyle{\sum\varepsilon_B\bigl(\phi\bigl(e^i\bigr)\bigr)\phi\bigl(e_ia\bigr)}&\mbox{as $\phi$ is a morphism of algebras}\\
	&=&\displaystyle{\sum\varepsilon_B\bigl(\phi\bigl(ae^i\bigr)\bigr)\phi\bigl(e_i\bigr)}&\mbox{$\Delta$ is a morphism of $A$-bimodules}\\
	&=&\displaystyle{\sum\varepsilon_B\bigl(\phi(a)\phi\bigl(e^i\bigr)\bigr)\phi\bigl(e_i\bigr)}&\mbox{as $\phi$ is a morphism of algebras}\\
	&=&\displaystyle{\sum\varepsilon_B\bigl(\phi(a)f^j\bigr)f_js_\phi} &\mbox{Proposition 4.8}\\
	&=&\phi(a)s_\phi&\mbox{$\varepsilon_B$ trace of $B$}
\end{array}$$ 
	
\end{proof}

\begin{lem} Let $A$ be a Frobenius $R$-algebra and $X, X'$ right $A$-modules such that $X$ is a finitely generated projective $R$-module then:\\
		The submodule $Hom^{proj}_{A}(X,X')$ of $Hom_{A}(X,X')$ consisting of maps that factorize through a finitely generated projective right $A$-module coincides with $Im(Tr_A)$ where
		$$\begin{array}{cccc}
			Tr_{A}:&Hom_{R}(X,X')&\to& Hom_{A}(X,X')\\
			& \alpha&\mapsto&\Bigl[\displaystyle{x\mapsto \sum \alpha\bigl(xe^i\bigr)e_i}\Bigr]	
		\end{array}$$
		
\end{lem}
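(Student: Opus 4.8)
The plan is to establish the two inclusions $\operatorname{Im}(Tr_A)\subseteq Hom^{proj}_A(X,X')$ and $Hom^{proj}_A(X,X')\subseteq\operatorname{Im}(Tr_A)$ separately, after the preliminary check that $Tr_A$ actually lands in $Hom_A(X,X')$. For that preliminary check I would use that the Frobenius coproduct, being a counital map of $A$-bimodules, yields for every $a\in A$ the identity $\sum_i ae^i\otimes e_i=\sum_i e^i\otimes e_i a$ in $A\otimes_R A$ (this is contained in the Proposition recalled above, $\Delta_A(a)=\sum_i ae^i\otimes e_i=\sum_i e^i\otimes e_i a$); applying to this identity the $R$-balanced map $A\otimes_R A\to X'$, $u\otimes v\mapsto\alpha(xu)\,v$, one gets $Tr_A(\alpha)(xa)=\sum_i\alpha(xae^i)e_i=\sum_i\alpha(xe^i)(e_ia)=Tr_A(\alpha)(x)\,a$, so $Tr_A(\alpha)$ is right $A$-linear. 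Since $\alpha\mapsto Tr_A(\alpha)$ is visibly $R$-linear, $\operatorname{Im}(Tr_A)$ is an $R$-submodule of $Hom_A(X,X')$.

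For $\operatorname{Im}(Tr_A)\subseteq Hom^{proj}_A(X,X')$, I would factor $Tr_A(\alpha)$ through $P:=X\otimes_R A$, viewed as a right $A$-module via its second tensor factor. Because $X$ is finitely generated projective over $R$ (a summand of some $R^n$), $P$ is finitely generated projective as a right $A$-module (a summand of $A^n$). Define $\rho:X\to P$ by $\rho(x)=\sum_i xe^i\otimes e_i$ and $\sigma:P\to X'$ by $\sigma(x\otimes a)=\alpha(x)\,a$. Then $\sigma$ is right $A$-linear by inspection, $\rho$ is right $A$-linear by the same identity $\sum_i ae^i\otimes e_i=\sum_i e^i\otimes e_i a$ (applied through $u\otimes v\mapsto xu\otimes v$), and $\sigma\circ\rho=Tr_A(\alpha)$; hence $Tr_A(\alpha)$ factors through a finitely generated projective right $A$-module.

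For the reverse inclusion, note first that a finitely generated projective right $A$-module is a direct summand of some $A^n$, so any $f\in Hom^{proj}_A(X,X')$ factors through $A^n$; decomposing an $A$-linear map $A^n\to X'$ on the standard generators and using that $\operatorname{Im}(Tr_A)$ is $R$-linearly closed, it suffices to show that every map of the form $f(x)=y\,g(x)$, with $g:X\to A$ right $A$-linear and $y\in X'$, lies in $\operatorname{Im}(Tr_A)$. For such an $f$ I would take $\alpha(x)=\varepsilon_A\bigl(g(x)\bigr)\,y\in X'$, which is $R$-linear in $x$ since $\varepsilon_A$ and $g$ are; then, using that $g$ is right $A$-linear together with the counit identity $\sum_i\varepsilon_A(be^i)e_i=b$ (from $(\varepsilon_A\otimes\operatorname{Id})\circ\Delta_A=\operatorname{Id}$ and the Proposition),
$$Tr_A(\alpha)(x)=\sum_i\varepsilon_A\bigl(g(xe^i)\bigr)\,y\,e_i=\sum_i\varepsilon_A\bigl(g(x)e^i\bigr)\,y\,e_i=y\sum_i\varepsilon_A\bigl(g(x)e^i\bigr)e_i=y\,g(x)=f(x),$$
so $f=Tr_A(\alpha)\in\operatorname{Im}(Tr_A)$.

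\textbf{Main obstacle.} The only genuinely delicate point is the reduction in the last step — passing from ``factors through an arbitrary finitely generated projective right $A$-module'' to ``is a finite sum of maps $x\mapsto y\,g(x)$'' — which requires handling the splitting of $P$ off $A^n$ and the description of $A$-linear maps out of $A^n$; and throughout one must keep careful track of the fact that all tensor products and balanced maps are over $R$ and that $R$ acts centrally, so that the various maps $\rho$, $\sigma$, $\alpha$ are well defined. Everything else rests on the two bookkeeping identities $\sum_i ae^i\otimes e_i=\sum_i e^i\otimes e_i a$ and $\sum_i\varepsilon_A(ae^i)e_i=a$, which hold precisely because the Frobenius coproduct is a counital morphism of $A$-bimodules.
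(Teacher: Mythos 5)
Your proof is correct and follows essentially the same route as the paper's: both directions rest on the identities $\sum_i ae^i\otimes e_i=\sum_i e^i\otimes e_ia$ and $\sum_i\varepsilon_A(be^i)e_i=b$, your factorization through $X\otimes_R A$ is the paper's map $\gamma:X\to A^n$ written intrinsically, and your choice $\alpha(x)=\varepsilon_A(g(x))y$ is exactly the paper's $\alpha(x)=\sum_i(t\circ g_i\circ\gamma)(x)\,\delta(y_i)$ after your (valid) reduction to rank-one maps. The only cosmetic difference is that the paper invokes the dual basis lemma for $P$ directly instead of first splitting $P$ off $A^n$.
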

\begin{proof}
	We will first consider $\beta\in Hom^{proj}_{A}(X,X')$ and prove that $\beta\in Im(Tr_{A})$. Since $\beta\in Hom^{proj}_{A}(X,X')$, there exist $\gamma:X\to P$ and $\delta:P\to X'$ morphisms of right $A$-modules such that $\delta\circ \gamma=\beta$. Since $P$ is finitely generated projective right $A$-module, given $y\in P$, $y=\sum y_ig_i(y)$ with $g_i:P\to A$ of right $A$-modules.\bigbreak 
	We will now consider $\alpha(x)=\sum (t\circ g_i\circ \gamma)(x)\delta(y_i) $ with $t$ the trace of $A$ and prove that $Tr_{A}(\alpha)=\beta$:\\
	\begin{align*}
		Tr_{A}(\alpha)(x)&=\sum_j \alpha\bigl(xe^j\bigr)e_j=\sum_j \sum_i (t\circ g_i\circ \gamma)(xe^j)\delta(y_i)e_j \\
		&=\sum_j \sum_i t (g_i( \gamma(x)e^j))\delta(y_i)e_j\\
		&=\sum_j \sum_i t ((g_i (\gamma(x))e^j)\delta(y_i)e_j\\
		&=\sum_i \delta(y_i)\sum_j t ((g_i (\gamma(x))e^j)e_j\\
		&=\sum_i \delta(y_i)g_i (\gamma(x))\\
		&=\delta\left(\sum_i y_i g_i\bigl(\gamma(x)\bigr)\right)\\
		&=\delta(\gamma(x))=\beta(x).
	\end{align*}
	Now we will start with $\beta\in Im(Tr_{A})$ and prove that  $\beta\in Hom^{proj}_{R}(X,X')$. \\Let $\beta(x)=Tr_{A}(\alpha)(x)=\sum_j \alpha(xe^j)e_j$ for some $\alpha\in Hom_{R}(X,X')$. Since $X$ is a finitely generated projective $R$-module then $\displaystyle{xe^j=\sum_{i=1}^{n}f_i\bigl(xe^j\bigr)x_i}$ with $f_i:X\to R$ of $R$-modules and 
	$$\beta(x)=\sum_j\alpha\bigl(xe^j\bigr)e_j=\sum_j \alpha\Bigl(\sum_if_i(xe^j)x_i\Bigr)e_j=\sum_{i,}\sum_{j}f_i(xe^j)\alpha(x_i)e_j.$$
	Let us now define $\gamma:X\to A^n$ such that $$\gamma(x)=\Bigl(\sum_{j}f_1(xe^j)e_j, \sum_{j}f_2(xe^j)e_j,\cdots , \sum_{j}f_n(xe^j)e_j\Bigr).$$
	To see that $\gamma$ is a right $A$-module morphism we use that $\Delta_A$ is an $A$-bimodule morphism, that is
	$$\begin{array}{rcl}
		\gamma(x\cdot a)&=&\Bigl(\sum_{j}f_1(xae^j)e_j, \sum_{j}f_2(xae^j)e_j,\cdots , \sum_{j}f_n(xae^j)e_j\Bigr)\\
		&=&\Bigl(\sum_{j}f_1(xe^j)e_ja, \sum_{j}f_2(xe^j)e_ja,\cdots , \sum_{j}f_n(xe^j)e_ja\Bigr)\\
		&=&\gamma(x)\cdot a
	\end{array}$$
	
	Let be $\delta:A^n\to X'$ defined as $$\delta(a_1, a_2,\cdots, a_n)=\sum_{i=1}^{n} \alpha(x_i)a_i.$$ 
	It is clear that $\delta$ is a right $A$-module morphism and $\beta=\delta\circ \gamma$.\\
	Finally, as $A^n$ is finitely generated projective right $A$-module, we conclude that $\beta\in Hom^{proj}_{A}(X,X')$.
\end{proof}
\begin{prop}
	Let $A$ be a Frobenius $R$-algebra with Casimir element $C_A=\sum_{i} e^i\otimes e_i$. Let $X$ be a finitely generated $R$-module. Then $X$ is a projective right $A$-module if and only if there exists a morphism of $R$-modules $\alpha$ such that $x=\sum \alpha\bigl(xe^i\bigr)e_i$,  $\forall x\in X$.
\end{prop}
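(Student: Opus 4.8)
The plan is to recognize the claimed statement as essentially a special case of the previous Lemma with $X' = X$ and to identify the map $\alpha$ there with the identity morphism. First I would observe that a finitely generated right $A$-module $X$ (which is automatically finitely generated projective over $R$, since $A$ is finitely generated projective over $R$) is projective as a right $A$-module if and only if the identity map $\operatorname{Id}_X$ factors through a finitely generated projective right $A$-module; this is the standard characterization of projectivity. Indeed, if $X$ is projective then $\operatorname{Id}_X$ trivially factors through $X$ itself, which is finitely generated projective; conversely, if $\operatorname{Id}_X = \delta \circ \gamma$ with $\gamma : X \to P$, $\delta : P \to X$ morphisms of right $A$-modules and $P$ finitely generated projective, then $X$ is a direct summand of $P$ (with $\gamma$ a split monomorphism), hence projective.

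Next I would apply the preceding Lemma with $X' = X$: it gives that $\operatorname{Hom}^{proj}_A(X,X) = \operatorname{Im}(\operatorname{Tr}_A)$, where $\operatorname{Tr}_A(\alpha)(x) = \sum \alpha(x e^i) e_i$. Combining this with the observation above, $X$ is projective as a right $A$-module if and only if $\operatorname{Id}_X \in \operatorname{Hom}^{proj}_A(X,X)$, which holds if and only if $\operatorname{Id}_X \in \operatorname{Im}(\operatorname{Tr}_A)$, i.e. if and only if there is a morphism of $R$-modules $\alpha : X \to X$ with $\operatorname{Tr}_A(\alpha) = \operatorname{Id}_X$, that is, $x = \sum \alpha(x e^i) e_i$ for all $x \in X$. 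This is exactly the asserted equivalence.

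The only point requiring a little care — and what I expect to be the main (minor) obstacle — is the direction "projective implies the existence of $\alpha$": here one must check that $\operatorname{Id}_X$ genuinely lies in $\operatorname{Hom}^{proj}_A(X,X)$, which needs $X$ itself to be a \emph{finitely generated} projective right $A$-module so that it can serve as the intermediate module $P$ in the factorization. This is why the hypothesis that $X$ is finitely generated over $R$ (equivalently, together with projectivity, finitely generated over $A$) is essential, and I would make this explicit. The reverse direction needs nothing beyond the Lemma and the splitting argument sketched above. I would therefore present the proof as: (i) reduce projectivity of $X$ to $\operatorname{Id}_X \in \operatorname{Hom}^{proj}_A(X,X)$; (ii) invoke the Lemma to rewrite this as $\operatorname{Id}_X \in \operatorname{Im}(\operatorname{Tr}_A)$; (iii) unwind the definition of $\operatorname{Tr}_A$ to obtain the stated formula.
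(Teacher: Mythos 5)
Your proposal matches the paper's proof essentially verbatim: both directions are obtained by reducing projectivity of $X$ to the condition $\operatorname{Id}_X\in Hom^{proj}_A(X,X)$, applying the preceding Lemma with $X'=X$ to identify this with $\operatorname{Id}_X\in\operatorname{Im}(Tr_A)$, and unwinding the definition of $Tr_A$. Your extra remark about needing $X$ to be finitely generated so that it can itself serve as the intermediate projective module is a point the paper passes over with the word ``trivially,'' so it is a welcome clarification rather than a deviation.
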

\begin{proof}
	($\Rightarrow$) Consider $X$ a projective right $A$-module. Trivially $Id_{X}\in Hom^{proj}_{R}(X,X)$ and using the above lemma there exists $\alpha\in End_{R}(X)$ such that $Id_X=Tr_{A}(\alpha)$ and $\sum \alpha\bigl(xe^i\bigr)e_i=x$ for all $x\in X$.\\
	($\Leftarrow$) We have that $Tr_{A}(\alpha)=Id_X$ for some $\alpha\in End_{R}(X)$ so $Id_{X}\in Hom^{proj}_{R}(X,X)$. This means that there exist $P$ a finitely generated projective right $A$-module and right $A$-module morphisms $f:X\to P$, $g:P\to X$ such that $g\circ f=Id_X$. This information allows us to prove in a simple way that $X$ is a right projective $A$-module.
	
\end{proof}

 Next we will prove a weaker version of Proposition 4.4 of \cite{B} with $A$ and $B$ Frobenius algebras.  
 Note that if we take $A$ and $B$ non-symmetric Frobenius algebras in condition 2 we can only say that the morphism splits as right $A$-module morphism.
	
	\begin{prop}\label{prop14}
	Let $A$ and $B$ Frobenius algebras and $\phi:A\twoheadrightarrow  B$ an epimorphism of algebras.  The following conditions are equivalent:
	\begin{enumerate}
		\item $s_{\phi}$ is invertible in $B$.
		\item $\phi:A\twoheadrightarrow  B$ splits as a right $A$-module morphism.
		\item $B$ is a projective right $A$-module.
		\item Every projective $B$-module is a projective right $A$-module.
	\end{enumerate}
\end{prop}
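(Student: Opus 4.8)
**The plan is to prove the cycle $(1)\Rightarrow(2)\Rightarrow(3)\Rightarrow(4)\Rightarrow(1)$, leaning on Proposition 4.11 (the projectivity criterion via the trace map) and Proposition 4.8 (the relation $\phi(\omega_A)=s_\phi\omega_B$ and its Casimir-element form).**

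For $(1)\Rightarrow(2)$: assume $s_\phi$ is invertible in $B$, and recall $s_\phi$ is central in $B$ (by the Lemma preceding this proposition), so $s_\phi^{-1}$ is central too. I want a right $A$-module section $\sigma:B\to A$ of $\phi$. The natural candidate is built from the Casimir element of $A$: write $C_A=\Delta_A(1)=\sum_i e^i\otimes e_i$, and define $\sigma(b)=\sum_i \varepsilon_B(b\,\phi(e^i))\,s_\phi^{-1}e_i$ for $b\in B$, where I use the counit $\varepsilon_B$ of the Frobenius algebra $B$ and view this as an element of $A$. One checks $\sigma$ is a right $A$-module map using that $\Delta_A$ is an $A$-bimodule morphism (so $\sum_i \phi(e^i)\otimes\phi(e_ia)=\sum_i\phi(ae^i)\otimes\phi(e_i)$, transported to $B$), exactly as in the proof of Proposition 4.8. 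Then $\phi\circ\sigma(b)=\sum_i\varepsilon_B(b\phi(e^i))s_\phi^{-1}\phi(e_i)=s_\phi^{-1}\psi(\phi\otimes\phi)(C_A)(b)=s_\phi^{-1}s_\phi b=b$ by the computation in the proof of Proposition 4.8, so $\sigma$ is the desired splitting.

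For $(2)\Rightarrow(3)$: if $\phi$ splits as right $A$-modules, then $B$ is a direct summand of $A$ as a right $A$-module (via $\sigma$ and $\phi$, since $\phi\sigma=\mathrm{id}_B$), hence $B$ is projective as a right $A$-module. For $(3)\Rightarrow(4)$: let $P$ be a projective $B$-module; then $P$ is a direct summand of a free $B$-module $B^{(\Lambda)}$, and restriction of scalars along $\phi$ commutes with direct sums, so $P$ is a direct summand (as right $A$-module) of $B^{(\Lambda)}$, which is projective over $A$ by $(3)$; hence $P$ is projective over $A$. (If one insists on the finitely generated framework of Proposition 4.11, restrict to finitely generated $P$ and finite free modules.) For $(4)\Rightarrow(1)$: apply $(4)$ to $P=B$ regarded as the regular $B$-module, which is projective over $B$, so $B$ is a projective right $A$-module; now invoke Proposition 4.11 with $X=B$ (finitely generated over $R$ since $B$ is Frobenius over $R$, viewed as a right $A$-module via $\phi$), obtaining $\alpha\in\mathrm{Hom}_R(B,B)$ with $b=\sum_i\alpha(b\,e^i\!\cdot_\phi)\,e_i\cdot_\phi = \sum_i \alpha(b\phi(e^i))\phi(e_i)$ for all $b\in B$, where the right $A$-action on $B$ is through $\phi$. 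Evaluating at $b=1_B$ and pushing $\alpha$ through (using that $\alpha$ is $R$-linear and $\varepsilon_B$-compatible, or more directly comparing with $s_\phi=\sum_i\varepsilon_B(\phi(e^i))\phi(e_i)$) yields that $s_\phi$ acts invertibly; since $s_\phi$ is central in $B$, it is a unit of $B$.

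The main obstacle is the last implication $(4)\Rightarrow(1)$: extracting invertibility of $s_\phi$ from the abstract trace-map criterion of Proposition 4.11 requires translating "$Id_B = Tr_A(\alpha)$" into the concrete statement that multiplication by $s_\phi$ on $B$ has a one-sided (hence, by centrality, two-sided) inverse. The clean way is to note that for \emph{any} $\alpha\in\mathrm{Hom}_R(B,B)$, the map $x\mapsto \sum_i\alpha(x\phi(e^i))\phi(e_i)$ has image inside $B s_\phi$ — because one can commute $\alpha$ past the $\phi(e^i)$ using centrality of $s_\phi$ in $B$ and the bimodule property of $\Delta_A$, showing $\sum_i\alpha(x\phi(e^i))\phi(e_i)\in Bs_\phi$ for all $x$ — so $Id_B=Tr_A(\alpha)$ forces $B=Bs_\phi$, i.e. $s_\phi$ is right-invertible in $B$, hence invertible since it is central. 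Care is needed because $A$ is only nearly Frobenius (not Frobenius), so $\{e_i\},\{e^i\}$ are \emph{not} dual bases; nonetheless every step used above relies only on $\Delta_A$ being an $A$-bimodule map, which does hold, and on $B$ being an honest Frobenius algebra, so the argument goes through.
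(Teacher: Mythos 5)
Your argument follows essentially the same route as the paper's: the section $\sigma$ built from $C_A$ and $s_\phi^{-1}$ for $(1)\Rightarrow(2)$, standard projectivity facts for $(2)\Rightarrow(3)\Rightarrow(4)$, and the identity $\sum_i\phi(e^i)\otimes\phi(e_i)=\sum_j f^j\otimes f_j s_\phi$ fed into the trace criterion for $(4)\Rightarrow(1)$; your direct-summand phrasing of $(2)\Rightarrow(3)$ is an equivalent (slightly cleaner) variant of the paper's lifting argument. Two small repairs: as literally written, $\sigma(b)=\sum_i\varepsilon_B\bigl(b\phi(e^i)\bigr)s_\phi^{-1}e_i$ multiplies $s_\phi^{-1}\in B$ against $e_i\in A$ and so is not an element of $A$ --- the fix is to move $s_\phi^{-1}$ inside $\varepsilon_B$ (legitimate since $s_\phi$ is central in $B$), which is exactly the paper's formula and makes your subsequent computation of $\phi\circ\sigma$ go through. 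Also, your closing caveat that ``$A$ is only nearly Frobenius'' misreads the hypotheses of this proposition: here $A$ is Frobenius, and that is genuinely needed for the trace criterion you invoke in $(4)\Rightarrow(1)$ (its proof uses the counit of $A$); the nearly Frobenius setting is the subject of the later Proposition, where only $(1)\Rightarrow(2)$ survives.
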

\begin{proof}
	For (1) $\Rightarrow$ (2) consider $C_{A}=\sum e^i\otimes e_i$  and define the map $\sigma:B\to A$ such that $$\sigma(b)=\sum \varepsilon_B\bigl(bs_{\phi}^{-1}\phi(e^i)\bigr)e_i.$$
	By definition, $\phi\circ \sigma(b)=\sum \varepsilon_B\bigl(bs_{\phi}^{-1}\phi(e^i)\bigr)\phi(e_i)$.\bigbreak
	Since $(\phi\otimes \phi)(C_A)=s_{\phi}C_B$ and $s_{\phi}$ is invertible we deduce that $s_{\phi}^{-1}(\phi\otimes \phi)(C_A)=C_B$.
	This means that $C_B=\sum f^i\otimes f_i=\sum s_{\phi}^{-1}\phi(e^i)\otimes \phi(e_i)$. Using that $\sum \varepsilon_B(bf^i)f_i=b$ we conclude that
	$\sum \varepsilon_B(bs_{\phi}^{-1}\phi(e^i))\phi(e_i)=b$ and $\phi$ splits.\bigbreak
To prove that $\sigma$ is a right $A$-module morphism we use that $s_\phi\in ZB$.
	$$\sigma\bigl(b\cdot a\bigr)=\sigma\bigl(b\phi(a)\bigr)=\sum\varepsilon_B\bigl(b\phi(a)s_\phi^{-1}\phi\bigl(e^i\bigr)\bigr)e_i=
	\sum\varepsilon_B\bigl(bs_\phi^{-1}\phi(a)\phi\bigl(e^i\bigr)\bigr)e_i$$
	$$=\sum\varepsilon_B\bigl(bs_\phi^{-1}\phi\bigl(ae^i\bigr)\bigr)e_i=\sum\varepsilon_B\bigl(bs_\phi^{-1}\phi\bigl(e^i\bigr)\bigr)e_ia=\sigma(b)\cdot a.$$
	\\
	For (2) $\Rightarrow$ (3) we need to prove that $B$ is a projective right $A$-module considering the action $b\cdot a=b\phi(a)$. Given an epimorphism of A-modules $f:M\to N$ and an epimorphism $g:B\to M$ we need to find $h:B\to M$, a morphism of $A$-modules such that $f\circ h=g$. Consider $g \circ\phi:A\to N$. Since $A$ is a projective $A$-module then there exists $p:A\to M$ such that $f\circ p=g \circ\phi$. If $h=p\circ\sigma$ then $f\circ h=f \circ p\circ\sigma=g \circ\phi\circ \sigma=g$.	\bigbreak
	(3) $\Rightarrow$ (4) is a known result.\bigbreak
	Finally, we will prove (4) $\Rightarrow$ (1). \\ Since $B$ is a projective $A$-module then there exists $\alpha\in End_R(B)$ such that $Tr_{A}(\alpha)=Id_{B}$, that is $b=\sum \alpha(b\cdot e^i)\cdot e_i$ for all $b\in B$.\\ On the other hand, since $$\sum \phi(e^i)\otimes \phi(e_i)=s_{\phi}\sum f^j\otimes f_j=\sum f^j\otimes f_js_{\phi}$$ we have that 
	\begin{align*}
	Tr_A(\alpha)(b)&=\sum \alpha(b\cdot e^i)\cdot e_i\\
	&=\sum \alpha(b\phi(e^i))\phi(e_i)\\
	&=\sum \alpha(bf^j)f_js_{\phi}\\
	&=Tr_{B}(\alpha)(b)s_{\phi},
	\end{align*} 
	
	so $Tr_{B}(\alpha)(1_B)s_{\phi}=Tr_A(\alpha)(1_B)=Id_B(1_B)=1_B$ and  $s_{\phi}$ is invertible.
	
	\end{proof}
It is natural to ask if all the equivalences in Proposition \ref{prop14} hold in the case $A$ symmetric nearly Frobenius or just nearly Frobenius.  The following results and examples answer this questions.\bigbreak
The following example shows that Proposition \ref{prop14} is not valid if we require $A$ to be a non-symmetric nearly Frobenius algebra.
\begin{example}
	Consider $\displaystyle{A=\frac{\mathbb{K}Q_1}{I_1}}$ and $\displaystyle{B=\frac{\mathbb{K}Q_2}{I_2}}$ with $Q_1$ and $Q_2$:\\
	$$Q_1= \xymatrix{
			\bullet_1 \ar@(ul,ur)[]^{\alpha} \ar[r]_\beta &
			\bullet_2 
		}\quad\mbox{and}\quad Q_2= \xymatrix{
			\bullet_1 \ar@(ul,ur)[]^{\alpha} 
		}$$	
	$I_1=\langle\alpha^2, \beta\alpha\rangle$ and $I_2=\langle\alpha^2\rangle$.\\
	The algebra $B$ is a symmetric algebra with Casimir element $C_{B}=1\otimes \alpha+\alpha\otimes 1$, and counit $\varepsilon:B\to \mathbb{K}$ defined as $\varepsilon(e_1)=1$ and $\varepsilon(\alpha)=0$.\\
	The algebra $A$ is a nearly Frobenius algebra and the coproducts have the form $\de_{A}(1)=a\alpha\otimes \alpha+a'\alpha \otimes \beta$ where $a,a'\in \mathbb{K}$.\\	
	Consider the epimorphism of algebras $\phi:A\to B$ with $\phi(e_1)=e_1$, $\phi(e_2)=0$, $\phi(\alpha)=\alpha$ and $\phi(\beta)=0$.\\
	Then $s_{\phi}=\varepsilon(a\alpha)\alpha+\varepsilon(a'\alpha)0=0$ so is not invertible but if we consider the inclusion of 
	$Q_2$ in $Q_1$ $i:B\to A$ the morphism $\phi$ splits. 	
\end{example}	

\begin{prop}\label{prop2}
	Let $A$ be a symmetric nearly Frobenius algebra and $B$ symmetric algebra with $\phi: A\twoheadrightarrow  B$ an epimorphism of algebras. Then if $s_{\phi}$ is invertible$,\phi$ splits as a morphism of $A$-bimodules.
\end{prop}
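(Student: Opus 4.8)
The plan is to promote the $A$-linear section built in the proof of Proposition~\ref{prop14} to a bimodule section, using the two symmetry hypotheses to supply exactly the missing one-sided linearity. Writing $C_A=\Delta_A(1)=\sum_i e^i\otimes e_i$ as in the statement, I would again set
$$\sigma(b)=\sum_i\varepsilon_B\bigl(b\,s_\phi^{-1}\phi(e^i)\bigr)e_i .$$
This is well defined: $s_\phi$ is central and invertible in $B$, and $\sigma(b)$ is the value at the fixed element $\Delta_A(1)\in A\otimes A$ of the well-defined linear map $x\otimes y\mapsto\varepsilon_B\bigl(b s_\phi^{-1}\phi(x)\bigr)y$, hence does not depend on the chosen expansion. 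The first step is to observe that the parts of the proof of Proposition~\ref{prop14} that establish $\phi\circ\sigma=\operatorname{Id}_B$ and the right $A$-linearity of $\sigma$ only use that $A$ is nearly Frobenius, that $s_\phi\in Z(B)$, the earlier identity $(\phi\otimes\phi)(C_A)=s_\phi C_B$, and the fact $\psi(C_B)=\operatorname{Id}_B$ (valid since $B$ is Frobenius); none of them needs $A$ to be Frobenius, so they go through unchanged here. Thus the whole problem reduces to showing that $\sigma$ is also left $A$-linear, i.e.\ $\sigma\bigl(\phi(a)b\bigr)=a\,\sigma(b)$ for all $a\in A$, $b\in B$.

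For the left-linearity I would bring in the two symmetry hypotheses successively. Using that $B$ is symmetric, $\varepsilon_B\bigl(\phi(a)\,b s_\phi^{-1}\phi(e^i)\bigr)=\varepsilon_B\bigl(b s_\phi^{-1}\phi(e^i)\phi(a)\bigr)=\varepsilon_B\bigl(b s_\phi^{-1}\phi(e^i a)\bigr)$, so $\sigma\bigl(\phi(a)b\bigr)=\sum_i\varepsilon_B\bigl(b s_\phi^{-1}\phi(e^i a)\bigr)e_i$, which is the image of $\sum_i e^i a\otimes e_i$ under the same linear map used to define $\sigma$. The crux is then the identity
$$\sum_i e^i a\otimes e_i=\sum_i e^i\otimes a e_i\qquad\text{in } A\otimes A ,$$
which is exactly where the symmetry of $A$ is needed. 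I would derive it from the $A$-bimodule property of $\Delta_A$, $\sum_i a e^i\otimes e_i=\sum_i e^i\otimes e_i a$, together with $\tau\Delta_A(1)=\Delta_A(1)$, i.e.\ $\sum_i e^i\otimes e_i=\sum_i e_i\otimes e^i$: applying $x\otimes y\mapsto x\otimes ya$ to the two expansions of $\Delta_A(1)$ and using the bimodule identity gives $\sum_i a e^i\otimes e_i=\sum_i e_i\otimes e^i a$, and transposing gives $\sum_i e^i a\otimes e_i=\sum_i e_i\otimes a e^i$; applying instead $x\otimes y\mapsto x\otimes ay$ to the two expansions gives $\sum_i e^i\otimes a e_i=\sum_i e_i\otimes a e^i$; comparing the last two identities yields the claim. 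Applying the linear map $x\otimes y\mapsto\varepsilon_B\bigl(b s_\phi^{-1}\phi(x)\bigr)y$ to this identity then gives $\sigma\bigl(\phi(a)b\bigr)=\sum_i\varepsilon_B\bigl(b s_\phi^{-1}\phi(e^i)\bigr)a e_i=a\,\sigma(b)$.

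With $\sigma$ left and right $A$-linear and $\phi\circ\sigma=\operatorname{Id}_B$, one concludes that $\phi$ admits an $A$-bimodule section, which is the statement. I expect the main obstacle to be precisely the displayed identity $\sum_i e^i a\otimes e_i=\sum_i e^i\otimes a e_i$: it is the only genuinely new computation relative to Proposition~\ref{prop14}, and it is where one truly needs $A$ to be \emph{symmetric} nearly Frobenius rather than merely nearly Frobenius, since the bimodule property of $\Delta_A$ by itself only moves $a$ between the two outer positions $a e^i\otimes e_i$ and $e^i\otimes e_i a$, and it is $\tau\Delta_A(1)=\Delta_A(1)$ that transfers it to the inner positions. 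A minor point to verify carefully along the way is the claim above that the $\phi\circ\sigma=\operatorname{Id}_B$ and right-linearity arguments of Proposition~\ref{prop14} do not secretly use the Frobenius (as opposed to nearly Frobenius) structure of $A$.
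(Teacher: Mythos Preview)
Your proof is correct and follows essentially the same approach as the paper: the paper uses the identical section $\sigma$, invokes Proposition~\ref{prop14} for $\phi\circ\sigma=\operatorname{Id}_B$ and right $A$-linearity, and then proves left $A$-linearity via the symmetry of $B$ followed by the four-term identity $\sum ae^i\otimes e_i=\sum e^i\otimes e_ia=\sum ae_i\otimes e^i=\sum e_i\otimes e^ia$. You actually spell out more carefully than the paper does how this four-term identity yields the needed $\sum e^ia\otimes e_i=\sum e^i\otimes ae_i$ (the paper leaves that last step implicit), and you are right that the portions of Proposition~\ref{prop14} being reused require only the nearly Frobenius structure on $A$.
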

\begin{proof}
We can use the same map as the Proposition \ref{prop14}, that is $\sigma:B\to A$ such that $$\sigma(b)=\sum \epsilon_B(bs_{\phi}^{-1}\phi(e^i))e_i.$$
We only need to prove that $\sigma$ is a morphism of left $A$-modules.
	$$\sigma(a\cdot b)=\sigma\bigl(\phi(a)b\bigr)=\sum \varepsilon_B\bigl(\phi(a)bs_{\phi}^{-1}\phi(e^i)\bigr)e_i=\sum \varepsilon_B\bigl(bs_{\phi}^{-1}\phi(e^i)\phi(a)\bigr)e_i.$$
The last identification is a consequence of the symmetry property of $B$. On the other hand, using that $\phi$ is a morphism of algebras we have that:
$$\sigma(a\cdot b)=\sum \varepsilon_B\bigl(bs_{\phi}^{-1}\phi(e^ia)\bigr)e_i.$$
The symmetry property on $A$ means that $\sum e^i\otimes e_i=\sum e_i\otimes e^i$, using that $\Delta$ is a morphism of $A$-bimodules we have 
$$\sum ae^i\otimes e_i=\sum e^i\otimes e_ia=\sum ae_i\otimes e^i=\sum e_i\otimes e^ia. $$
Then $$\sigma(a\cdot b)=\sum \varepsilon_B\bigl(bs_{\phi}^{-1}\phi(e^ia)\bigr)e_i=\sum \varepsilon_B\bigl(bs_{\phi}^{-1}\phi(e^i)\bigr)ae_i=a\sum\varepsilon_B\bigl(bs_{\phi}^{-1}\phi(e^i)\bigr)e_i=a\sigma(b).$$
\end{proof}

The following example shows that a version of Proposition \ref{prop14} in the context of Proposition \ref{prop2}  does not hold.

\begin{example}
	 Let be $\displaystyle{A=\frac{\k[x]}{x^2}}$ and $\Delta_1(1)=x\otimes x$, $\Delta_2(x)=x\otimes 1+ 1\otimes x$. Note that $\bigl(A,\Delta_1\bigr)$ is a symmetric nearly Frobenius algebra and $\bigl(A, \Delta_2\bigr)$ is a symmetric algebra. In this case we consider $\phi=\operatorname{Id_A}:A\to A$. For this map the Schur element is
	 $$s_{\operatorname{Id_A}}=\sum\varepsilon_A\bigl(e^i\bigr)e_i=\varepsilon_A(x)x=x.$$
	 The Schur element is not invertible in $A$, but the map $\operatorname{Id_A}$ split as a morphism of $A$-bimodule.
\end{example}

\bibliographystyle{elsarticle-harv}

\end{document}